\definecolor{blue}{HTML}{1F77B4}
\definecolor{orange}{HTML}{FF7F0E}
\definecolor{green}{HTML}{2CA02C}
\pgfplotsset{compat=1.14}
\tikzset{set/.style={draw,circle,inner sep=0pt,align=center}}
\newtheorem{Theorem}[theorem]{Theorem}
\newtheorem{Definition}[theorem]{Definition}
\newtheorem{Example}[theorem]{Example}
\newtheorem{Lemma}[theorem]{Lemma}
\newtheorem{Proposition}[theorem]{Proposition}
\newtheorem{Remark}[theorem]{Remark}
\begin{document}
\title{On $\delta\mathbb{P}$-approximation Spaces}
%
%\titlerunning{Abbreviated paper title}
% If the paper title is too long for the running head, you can set
% an abbreviated paper title here
%
\author{Huda Mohsin\inst{1} \and
Faik Mayah\inst{2}}
%
%\authorrunning{F. Author et al.}
% First names are abbreviated in the running head.
% If there are more than two authors, 'et al.' is used.
%
\institute{Department of Mathematics, College of Education for Pure Sciences, Wasit University, Wasit, Iraq
\email{hudamohsin93@gmail.com}\\ \and
Department of Physics, College of Science, Wasit University, Wasit, Iraq\\
\email{fmayah@uowasit.edu.iq}}
\maketitle              % typeset the header of the contribution
\begin{abstract}
In order to deal with imprecision, ambiguity, and uncertainty in data analysis, Pawlak introduced rough set theory in 1982. This paper aims to expand the scope of basic set theory developed by presenting the notions of $\delta\mathbb{P}$-upper and $\delta\mathbb{P}$-lower approximations, that are based on the notion of $\delta\mathbb{P}$-open sets, we additionally examine a few of their fundamental characteristics.

\keywords{Rough sets and Upper and lower approximations and  $\mathbb{P}$-open sets and $\delta\mathbb{P}$-open sets.}
\end{abstract}

\section{Introduction}
The ``rough set theory" \cite{1}, a mathematical technique for coping with ambiguity or uncertainty, is attributed to Pawlak. Rough set theory and applications have substantially advanced since 1982. Rough set theory has several uses, particular, when analyzing data  both cognitive sciences and artificial intelligence \cite{2}, \cite{3}, \cite{4}. Pawlak and Skowron have recently published several fundamental ideas in rough set research as well as a number of applications \cite{5, 6}. An extension of set theory known as rough set theory \cite{5, 8} describes a portion of the universe as being described by two ordinary sets known as the lower and upper approximation. Initially, an equivalence was used to introduce Pawlak provides definitions for  both upper and lower approximations. Pawlak and Skowron \cite{5, 6} derived numerous intriguing characteristics of the lower and upper approximations in accordance with  the equivalence relations, the equivalence relation, on the other hand, seems to be a strict requirement that might restrict the suitability of the rough set model proposed by Pawlak. Equivalence relation or partition have undergone numerous extensions in recent years by being  replaced by concepts like binary relations \cite{10, 12}, neighborhood systems, by using a general relation, Abu-Donia \cite{17} talked about three distinct upper(lower) approximation types based on the appropriate neighborhood. These types were then used to create a collection of limited binary relationships in in two different methods. This theory, which primarily depends on a specific topological structure, has been extremely successful in  many fields of real-world applications. Numerous papers on generalizing and interpreting rough sets have been written \cite{13}, \cite{14}, \cite{15}, \cite{18}, \cite{20}, \cite{21}, \cite{23}, \cite{24}, \cite{25}. Weginger's generalization of rough sets is among the most important \cite{16} in 1989 when he introduced the idea of topological rough sets. The closure and interior operators of topological spaces were used to define the approximation in this generalization, which began with a topological space. The notion of $\delta\mathbb{P}$-open sets was first presented in \cite{11}. This work presents and investigates the concept of the $\delta\mathbb{P}$-approximation space. These areas aid in the development of a new categorization for the cosmos. Additionally, we explore the idea of "$\delta\mathbb{P}$-lower" furthermore "$\delta\mathbb{P}$-upper" approximation. Rough sets are compared to this idea as part of our study of $\delta\mathbb{P}$-rough sets. we also provide some opposition examples.

\section{Fundamental concepts $\delta\mathbb{P}$-open sets and topology}
  A pair$ (\mathbb{Y},\tau)$ that consists of a set $\mathbb{Y}$ and family $\tau$ of subset of $\mathbb{Y}$ that match the following criteria is topological space \cite{10}:
 \begin{enumerate}
 \item[A1.] $\emptyset, \mathbb{Y} \in  \tau$,
\item[A2.] $\tau$ is closed under  an arbitrary union,
\item[A3.] $\tau$ is closed  Under finite intersection .
 \end{enumerate}
The pair $(\mathbb{Y}, \tau)$ is referred to as a topological space, and the subset of $\mathbb{Y}$ that belongs to $\tau$ is referred to as open sets in the space. The opposite of the pair is called the closed subsets of $\mathbb{Y}$ are those that fall into the family $\tau$, the  family $\tau$ of open subsets of $\mathbb{Y}$ is often referred to as topology of $\mathbb{Y}$.\\ 
$\overline{S} $=$ \bigcap \{W \subseteq \mathbb{Y} : S \subseteq W$ and $W$ is closed $\}$ is known as $\tau$-closure of a subset $S\subset \mathbb{Y}$.\\
It appears that  the smallest closed subset of $\mathbb{Y}$ that contains $S$ is $\overline{S}$. Keep in mind that $S$ is only closed if and when  $S=\overline{S}$.\\ 
${S}^\circ = \bigcup \{V \subseteq  \mathbb{Y}:V \subset S $ and $V$ is open $\}$ is known $\tau$-interior of a subset $S \subseteq \mathbb{Y}$.\\
 It appears that ${S}^\circ$ is the union of all $\mathbb{Y}$ open subsets that contain in $S$. Keep in mind that $S$ is only open if and when $S =S^\circ$. Additionally, the $\tau $ 
boundary of a subset $S \subseteq \mathbb{Y}$ is denoted as $b(S)=\overline{S}-{S}^\circ$.\\
 Assume that $S$ is a subset of $\mathbb{Y}$, a topological space. Let ${S}^\circ$,$\overline{S}$, and $b(S)$ be interior, closure, and boundary of $S$, In that order . If $b(S)=\emptyset$ then $S$ is exact , if not, $S$ is rough. If and only if $\overline{S}={S}^\circ$, then it is evident that $S$ is exact.\\
 
\begin{Definition}
\cite{3} 
Let $(\mathbb{Y}, \tau)$ be a topological space the subset $ S \subseteq  \mathbb{Y}$ is referred to as Preopen if $S \subseteq Int(Cl(S))$.\\
 The opposite of Preopen set is Preclosed set. As we  that  indicate the set of all Preopen and Preclosed sets by $\mathbb{P}O(\mathbb{Y})$ and $\mathbb{P}C(\mathbb{Y})$.
 \end{Definition}

 \begin{Remark}
 Every topological space $(\mathbb{Y},\tau)$, has the property $\tau \subseteq \mathbb{P}O(\mathbb{Y})$.
 \end{Remark}

 \begin{Definition} 
 \cite{9}
Let $S$ be a subset of a topological space $ (\mathbb{Y},\tau)$. The $\delta$- closure of $S$ is defined as follow $cl_\delta (S) = \{x \in \mathbb{Y} : S \cap int(cl(A)) \neq \emptyset, A \in \tau$ and $ y \in A\}$. A set $S$ is referred to as $\delta$-closed if $S= cl_\delta(S)$.The opposite of a $\delta$-closed set is $\delta$-open.\\
Observe that $int_\delta (S) = \mathbb{Y} \backslash cl_\delta (\mathbb{Y} \backslash  S)$.
 \end{Definition}

\begin{Definition}
 \cite{11} 
 If $S$ is a subset of a topological space $(\mathbb{Y},\tau)$ and $S \subseteq int(cl_{\delta}(S))$, then $S$ is $\delta \mathbb{P}$-open.\\ 
$\delta\mathbb{P}O(\mathbb{Y})$ is the family of all $\delta\mathbb{P}$-open sets of $\mathbb{Y}$. $\delta\mathbb{P}$-closed is the complement of $\delta\mathbb{P}$-open.\\ 
The $ \delta\mathbb{P}$-closure of $S $ is $\mathbb{P}$cl$_\delta(S)$, which is the intersection of all $\delta\mathbb{P}$-closed sets that contain $S$.\\ 
The $\delta\mathbb{P}$-interior of $S$ is represented by $\mathbb{P}$int$_\delta(S)$ and is the union of all $\delta\mathbb{P}$-open sets that are contained in $S$. 
 
 \end{Definition}

 \begin{Lemma}
    \cite{17} The following hold for a subset $S$  of a topological space $(\mathbb{X},\tau)$.
    \begin{enumerate}
        \item $\mathbb{P}cl_\delta(S) = S \cup cl(int_\delta(S))$.
        \item $\mathbb{P}int_\delta(S) = S \cap int(cl_\delta(S))$.
        \item $\mathbb{P}cl_\delta(\mathbb{P}int_\delta(S)) = \mathbb{P}int_\delta(S) \cup cl(int_\delta(S))$.
        \item $\mathbb{P}int_\delta(\mathbb{P}cl_\delta(S)) = \mathbb{P}cl_\delta(S) \cap int(cl_\delta(S))$.
    \end{enumerate} 
 \end{Lemma}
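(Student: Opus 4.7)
The plan is to prove item (2) directly, derive (1) from it by a standard complementation argument, and then obtain (3) and (4) by applying (1) and (2) to the sets $\mathbb{P}int_\delta(S)$ and $\mathbb{P}cl_\delta(S)$ respectively, after verifying two auxiliary identities ($int_\delta(\mathbb{P}int_\delta(S)) = int_\delta(S)$ and $cl_\delta(\mathbb{P}cl_\delta(S)) = cl_\delta(S)$).

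For (2), one inclusion is easy: any $\delta\mathbb{P}$-open $V \subseteq S$ satisfies $V \subseteq int(cl_\delta(V)) \subseteq int(cl_\delta(S))$, whence taking the union over all such $V$ gives $\mathbb{P}int_\delta(S) \subseteq S \cap int(cl_\delta(S))$. The reverse inclusion reduces to showing that $T := S \cap int(cl_\delta(S))$ is itself $\delta\mathbb{P}$-open. The crucial auxiliary observation is that $U := int(cl_\delta(S))$ is a \emph{regular open} set. This holds because every $\delta$-closed set is ordinarily closed (since $cl(S) \subseteq cl_\delta(S)$ forces $S = cl_\delta(S) \supseteq cl(S) \supseteq S$ for a $\delta$-closed $S$), so $cl(U) \subseteq cl(cl_\delta(S)) = cl_\delta(S)$ and hence $int(cl(U)) \subseteq int(cl_\delta(S)) = U$. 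Given regularity, fix $z \in U$ and any open $A$ with $z \in A$; the open neighborhood $A \cap U$ of $z$ satisfies $S \cap int(cl(A \cap U)) \neq \emptyset$ (because $z \in U \subseteq cl_\delta(S)$), while $int(cl(A \cap U)) \subseteq int(cl(A)) \cap int(cl(U)) = int(cl(A)) \cap U$. Therefore $T \cap int(cl(A)) \supseteq S \cap int(cl(A \cap U)) \neq \emptyset$, which shows $z \in cl_\delta(T)$. Hence $U \subseteq cl_\delta(T)$, and since $U$ is open, $T \subseteq U \subseteq int(cl_\delta(T))$, as required.

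For (1), I would apply (2) to $\mathbb{Y}\setminus S$ and use $cl_\delta(\mathbb{Y}\setminus S) = \mathbb{Y}\setminus int_\delta(S)$ together with De Morgan's laws to conclude. For (3), item (1) applied to $\mathbb{P}int_\delta(S)$ gives $\mathbb{P}cl_\delta(\mathbb{P}int_\delta(S)) = \mathbb{P}int_\delta(S) \cup cl(int_\delta(\mathbb{P}int_\delta(S)))$, so it suffices to verify $int_\delta(\mathbb{P}int_\delta(S)) = int_\delta(S)$. The $\subseteq$ direction uses $\mathbb{P}int_\delta(S) \subseteq S$; the $\supseteq$ direction uses that $int_\delta(S)$ is $\delta$-open, hence open, hence $\delta\mathbb{P}$-open (because any open $W$ satisfies $W = int(W) \subseteq int(cl_\delta(W))$), and contained in $S$, so $int_\delta(S) \subseteq \mathbb{P}int_\delta(S)$, which on applying $int_\delta$ yields the desired containment. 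Statement (4) follows by the dual route from (2), applying it to $\mathbb{P}cl_\delta(S)$ and verifying $cl_\delta(\mathbb{P}cl_\delta(S)) = cl_\delta(S)$ via $S \subseteq \mathbb{P}cl_\delta(S) \subseteq cl_\delta(S)$ (the second inclusion because $cl(int_\delta(S)) \subseteq cl(cl_\delta(S)) = cl_\delta(S)$).

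The main obstacle is the $\delta\mathbb{P}$-openness of $T$. A naive attempt to find a point of $T \cap int(cl(A))$ by exploiting $z \in cl_\delta(S)$ only produces a point of $S \cap int(cl(A \cap U))$, which is automatically in $int(cl(U))$ but not obviously in $U$; the regular openness $U = int(cl(U))$ is precisely what closes this gap. The remaining deductions are essentially formal bookkeeping using the fact that $\delta$-open sets are always $\delta\mathbb{P}$-open.
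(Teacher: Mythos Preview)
The paper does not prove this lemma; it is quoted from \cite{17} (Raychaudhuri and Mukherjee) without argument, so there is no in-paper proof to compare against. Your proof is correct and self-contained. The only substantive step is the reverse inclusion in (2), and your treatment is the standard one: the set $U=int(cl_\delta(S))$ is regular open (because $cl_\delta(S)$, being $\delta$-closed, is closed, so $int(cl(U))\subseteq int(cl_\delta(S))=U$), and this is exactly what lets you upgrade a point of $S\cap int(cl(A\cap U))$ to a point of $T\cap int(cl(A))$, yielding $U\subseteq cl_\delta(T)$ and hence $T\subseteq U=int(U)\subseteq int(cl_\delta(T))$. The complementation for (1) and the auxiliary identities $int_\delta(\mathbb{P}int_\delta(S))=int_\delta(S)$, $cl_\delta(\mathbb{P}cl_\delta(S))=cl_\delta(S)$ for (3)--(4) are routine and correctly justified (using that $\delta$-open $\Rightarrow$ open $\Rightarrow$ $\delta\mathbb{P}$-open, and that $\delta$-closed sets are closed).
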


\section{Rough set}

Rough set theory was inspired by the  necessity of express subsets of a universe in terms of equivalency classes of a partition of that universe. A topological space known as approximation space $\mathbb{K} = (\mathbb{X}, \tau)$ is characterized by the partition, where $ \mathbb{R}$ is an equivalency relation and $\mathbb{X}$ is a set known as the universe \cite{1}. The terms chunks, granules, and elementary sets are also used to describe the equivalency classes of $\mathbb{R}$. To denote that an equivalency class contains an $x \in \mathbb{X}$, we will use $\mathbb{R}_x \subseteq \mathbb{X}$. Two operators in the approximation space are considered.
\begin{eqnarray*}
   \underline{\mathbb{R}}(S) &=& \{ x \in \mathbb{X} : \mathbb{R}_x \subseteq S\},\\
     \overline{\mathbb{R}}(S) &=& \{ x \in \mathbb{X} : \mathbb{R}_x \cap S \neq \emptyset\}.
    \end{eqnarray*}

Referred to as, respectively, the lower  and upper approximations of $S$ $\subseteq$ $\mathbb{X}$. Furthermore, let $NEG_{\mathbb{R}}(S) = \mathbb{X} - \overline{\mathbb{R}}(S)$ represent the negative region of $S$, and $POS_{\mathbb{R}}(S) = \underline{\mathbb{R}}(S) $ represent the positive region of $S$, $BN_{\mathbb{R}}(S) = \overline{\mathbb{R}}(S) - \underline{\mathbb{R}}(S)$ represent the boundary region of$ \mathbb{X}$.\\ 
If we assume that $\mathbb{X}$ is a finite, nonempty universe and that $S \subseteq \mathbb{X}$, then the accuracy measure can also be used to quantify the degree of completeness as follows:\\

\[
\propto_{\mathbb{R}}(S) =\frac{\mid\underline{\mathbb{R}}(S)\mid} {\mid\overline{\mathbb{R}}(S)\mid}, \indent S \neq  \emptyset,
    \]

Where $|\cdot|$ the set's cardinality is represented. The degree of knowledge completeness is attempted to be expressed via accuracy metrics. $\propto_{\mathbb{R}}(S)$ is capable to depict the size of the data sets boundary region, but the knowledge's organizational structure is more difficult to depict. Rough set theory has the essential advantage of being able to handle categories that, given a knowledge basis, cannot be properly specified. The rough sets framework can be used to measure the properties of potential data sets, we may quantify imprecision and convey topological characterization of it,  with the help of the following .

\begin{enumerate}
    \item  If $\underline{\mathbb{R}}(S) \neq \emptyset $   and  
 $\overline{\mathbb{R}}(S) \neq \mathbb{X}$, then $S$ is roughly $\mathbb{R}$- definable, and indicated by $RD(\mathbb{X})$.
 
    \item If $\underline{\mathbb{R}}(S) = \emptyset$ and $\overline{\mathbb{R}}(S) \neq \mathbb{X}$, then $S$ is internally $\mathbb{R}$-undefinable, and indicated by $IUD(\mathbb{X})$,
    
    \item  If $\underline{\mathbb{R}}(S) \neq \emptyset$ and $\overline{\mathbb{R}}(S) = \mathbb{X}$, then $S$ is externally $\mathbb{R}$- undefinable, and indicated by $EUD(\mathbb{X})$,
    
    \item If $\underline{\mathbb{R}}(S) = \emptyset$ and $\overline{\mathbb{R}}(S) = \mathbb{X}$, then $S$ is totally $\mathbb{R}$- undefinable, and indicated by $TUD(\mathbb{X})$.
\end{enumerate}
We can characterize rough sets in terms of the boundary region's size and structure by utilizing $\propto_{\mathbb{R}}(S)$  and the previously given categories. Viewed as a particular instance of relative sets, rough sets are associated with the notion of Belnap's logic \cite{7}.

\begin{Remark}
We indicate the relationship between a class of Preopen sets $\mathbb{P}$O$(\mathbb{X})$ and a topology $\tau$ on $\mathbb{X}$ that was utilized to obtain a subbase using $ \mathbb{R}_\mathbb{P}$. Furthermore, we represent $\mathbb{P}$- approximation space by $(\mathbb{X},\mathbb{R}_\mathbb{P})$.
\end{Remark}

\begin{Definition}  
If $ (\mathbb{X},\mathbb{R}_\mathbb{P})$ be a $\mathbb{P}$-approximation space, then the $\mathbb{P}$-lower (resp., $\mathbb{P}$-upper) approximation of  each nonempty subset $S$ of $\mathbb{X}$ follows: as:
\begin{eqnarray*}
\underline{\mathbb{R}}_\mathbb{P}(S) &=& \bigcup \{ V\in \mathbb{P}O(\mathbb{X}) :  V\subseteq S\} \\
\overline{\mathbb{R}}_\mathbb{P}(S) &=& \bigcap \{ W \in \mathbb{P}C(\mathbb{X}) : W \supseteq S \}
\end{eqnarray*}

We  are able to obtain the $\mathbb{P}$-approximation operator as shown below.  .
\begin{enumerate}

    \item  From the provided relation $\mathbb{R}$, find the right neighborhood $x\mathbb{R}$, where$x\mathbb{R}$ = $\{$ y: x $\mathbb{R}$ y$ \}$.
    \item Taking right neighborhoods $x\mathbb{R}$ as a sub-base to obtain the topology $\tau$.
    \item Preopen set family obtained by using open sets in topology $\tau$ "from Definition 1."
    \item To obtain $\mathbb{P}$- approximation operators, Use the set of all Preopen sets ( see Definition 4).
\end{enumerate}

\end{Definition}

\begin{Proposition} 
For every $ S \subseteq \mathbb{X}$ in any $\mathbb{P}$- approximation space $(\mathbb{X},\mathbb{R}_\mathbb{P})$ the following are hold :
\begin{enumerate}
    \item $b(S)= \underline{Edg}(S) \cup \overline {Edg}(S)$.
    \item $\mathbb{P}b(S)= \mathbb{P}\underline{Edg}(S) \cup \mathbb{P} \overline{Edg}(S)$.
\end{enumerate}

\end{Proposition}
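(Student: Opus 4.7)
The plan is to reduce both assertions to a single elementary set-theoretic identity: whenever $A \subseteq S \subseteq B$, one has the disjoint decomposition
\[
B \setminus A = (B \setminus S) \cup (S \setminus A).
\]
Specialising $A$ and $B$ to the appropriate lower and upper approximations, and reading $\underline{Edg}(S) = S \setminus \underline{\mathbb{R}}(S)$ together with $\overline{Edg}(S) = \overline{\mathbb{R}}(S) \setminus S$ (with the obvious $\mathbb{P}$-analogues), this already yields both claimed formulas as soon as the appropriate monotonicity chain is in place.

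For part 1, I first verify the inclusion chain $\underline{\mathbb{R}}(S) \subseteq S \subseteq \overline{\mathbb{R}}(S)$. Both inclusions are immediate from the definitions in Section 3 together with the reflexivity of the equivalence relation $\mathbb{R}$: if $\mathbb{R}_x \subseteq S$, then $x \in \mathbb{R}_x$ forces $x \in S$; and if $x \in S$, then $x \in \mathbb{R}_x \cap S$, so $x \in \overline{\mathbb{R}}(S)$. Substituting $A = \underline{\mathbb{R}}(S)$ and $B = \overline{\mathbb{R}}(S)$ into the identity above then gives $b(S) = \overline{\mathbb{R}}(S) \setminus \underline{\mathbb{R}}(S) = \overline{Edg}(S) \cup \underline{Edg}(S)$.

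For part 2, the same template is applied to the pair $(\underline{\mathbb{R}}_\mathbb{P}(S), \overline{\mathbb{R}}_\mathbb{P}(S))$ from Definition 6. The inclusion $\underline{\mathbb{R}}_\mathbb{P}(S) \subseteq S$ is built into its definition as a union of $\mathbb{P}$-open subsets of $S$, and dually $S \subseteq \overline{\mathbb{R}}_\mathbb{P}(S)$ because the latter is an intersection of $\mathbb{P}$-closed supersets of $S$. The same two-piece decomposition then produces the second identity in exactly the same way.

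The main obstacle is interpretive rather than computational: the symbols $\underline{Edg}$, $\overline{Edg}$, $\mathbb{P}b$, $\mathbb{P}\underline{Edg}$, and $\mathbb{P}\overline{Edg}$ are not formally introduced in the excerpt, so the proof hinges on committing to the standard rough-set reading in which the inner edge is $S$ minus its lower approximation and the outer edge is the upper approximation minus $S$. Once those conventions are fixed, both identities follow directly from the monotonicity of the two approximation operators and basic set algebra; no appeal to the $\delta\mathbb{P}$-open machinery of Lemma~5 is required at this stage.
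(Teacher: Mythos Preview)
Your proposal is correct and follows essentially the same route as the paper: the paper's proof of (2) is the one-line computation $\overline{\mathbb{R}}_\mathbb{P}(S) - \underline{\mathbb{R}}_\mathbb{P}(S) = (\overline{\mathbb{R}}_\mathbb{P}(S) - S) \cup (S - \underline{\mathbb{R}}_\mathbb{P}(S))$, which is precisely your identity $B\setminus A = (B\setminus S)\cup(S\setminus A)$ specialised to $A=\underline{\mathbb{R}}_\mathbb{P}(S)$ and $B=\overline{\mathbb{R}}_\mathbb{P}(S)$. You are simply more explicit than the paper in isolating the general set identity and in verifying the needed inclusion chain; your interpretive worry about the edge symbols is well founded, since the paper in fact only defines them later in Definition~\ref{def_01}.
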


\begin{proof}
 (2) It the follows from \\
$\mathbb{P}b(S)$ = $\overline{\mathbb{R}}_\mathbb{P}(S) - \underline{\mathbb{R}}_\mathbb{P}(S)$
=$ (\overline{\mathbb{R}}_\mathbb{P}(S) - S)$  $\cup$  $( S - \underline{\mathbb{R}}_\mathbb{P}(S))$
=$\mathbb{P}\underline{Edg}(S)$  $\cup$  $\mathbb{P} \overline{Edg}$.
\end{proof}

\begin{Definition}
      Assume that $S$ $\subseteq \mathbb{Y}$  and that $(\mathbb{Y},\mathbb{R}_\mathbb{P})$ is a $\mathbb{P}$ - approximation space and. Then there are the memberships $\underline{\in}$, $\overline{\in}$, $\underline{\in}_\mathbb{P}$, and $\overline{\in}_\mathbb{P}$, which are defined as , strong, weak, $\mathbb{P}$-strong, and $\mathbb{P}$-weak memberships respectively . 
\begin{enumerate}
    \item $ y$ $\underline{\in}$ $S $  iff    $y$ $\in$ $\underline{\mathbb{R}}(S)$,
    \item $y$ $ \overline{\in}$$ S $  iff   $y$ $\in$ $\overline{\mathbb{R}}(S)$,
    \item $ y  \underline{\in} _\mathbb{P} S $ iff  $ y \in \underline{\mathbb{R}}_\mathbb{P}(S)$,
    \item $ y \overline{\in}_\mathbb{P} S $ iff    $y \in \overline{\mathbb{R}}_\mathbb{P}(S)$.
    
\end{enumerate}    
\end{Definition}

\begin{Remark}
 As stated by  Definition 5, $\mathbb{P}$- lower and $\mathbb{P}$- upper approximation of a set $ S \subseteq \mathbb{Y}$ is possible to write as
 \begin{eqnarray*}
 \underline{\mathbb{R}}_\mathbb{P}(S) &=& \{ y \in S : y \underline{\in}_\mathbb{P} S \},\\     
 \overline{\mathbb{R}}_\mathbb{P}(S) &=& \{ y \in S : y \overline{\in}_\mathbb{P} S \}.
 \end{eqnarray*}
\end{Remark}

\begin{Definition}
    Assume that $S \subseteq \mathbb{Y}$ and that $(\mathbb{Y},\mathbb{R}_\mathbb{p})$  is a $\mathbb{P}$ - approximation space. the $\mathbb{P}$ - accuracy measure of $S$, defined as follows;
\[
 \propto_{\mathbb{R}_\mathbb{P}(S)}  =  \frac{|\underline{\mathbb{R}}_\mathbb{P} (S)|}{|\overline{\mathbb{R}}_\mathbb{P} (S)|}, \indent \indent S \neq \emptyset.
 \]
\end{Definition}

\begin{Definition}

 The subset $N \subseteq \mathbb{Y}$ of any $\mathbb{P}$- approximation space $(\mathbb{Y},\mathbb{R}_\mathbb{P})$ is referred to as;  
    \begin{enumerate}
    \item
    If $\underline{\mathbb{R}}_\mathbb{P}(N) \neq \emptyset$ and $\overline{\mathbb{R}}_\mathbb{P}(N) \neq \mathbb{Y}$, then roughly $\mathbb{R}_\mathbb{P}$- definable,  and indicated by $\mathbb{P}RD(\mathbb{Y})$,
    
    \item 
     If $\underline{\mathbb{R}}_\mathbb{P}(N) = \emptyset$ and $\overline{\mathbb{R}}_\mathbb{P}(N) \neq \mathbb{Y}$, then internally $\mathbb{R}_\mathbb{P}$- undefinable,and indicated by $\mathbb{P}IUD(\mathbb{Y})$,
    
    \item
    if $\underline{\mathbb{R}}_\mathbb{P}(N) \neq \emptyset$ and $\overline{\mathbb{R}}_\mathbb{P}(N) = \mathbb{Y}$,then externally $\mathbb{R}_\mathbb{P}$- undefinable, and indicated by $\mathbb{P}EUD(\mathbb{Y})$,
    
    \item 
    If $\underline{\mathbb{R}}_\mathbb{P}(N) = \emptyset$ and $\overline{\mathbb{R}}_\mathbb{P}(N) = \mathbb{Y}$, then totally $\mathbb{R}_\mathbb{P}$- undefinable, and indicated by $\mathbb{P}TUD(\mathbb{Y})$.
    \end{enumerate}
    
\end{Definition}

\begin{Remark}
 For any $\mathbb{P}$ - approximation space $(\mathbb{Y},\mathbb{R}_\mathbb{P})$ the following hold:
 
 \begin{enumerate}

\item
$\mathbb{P}$RD$(\mathbb{Y})$ $\supseteq$ $ RD(\mathbb{Y})$,
\item 
$\mathbb{P}$IUD$(\mathbb{Y})$ $\subseteq$ $IUD(\mathbb{Y})$,
\item 
$\mathbb{P}$EUD$(\mathbb{Y})$ $\subseteq$ $EUD(\mathbb{Y})$,

\item 
$\mathbb{P}$TUD$(\mathbb{Y})$ $\subseteq$ $TUD(\mathbb{Y})$.

\end{enumerate}

\end{Remark}

\section{A novel approach to rough categorization using the $\delta\mathbb{P}$-open set}

\begin{Remark}
A subbase for a topology $\tau$ on  $\mathbb{X}$ and a class of $\delta \mathbb{P}O(\mathbb{X})$ of all $\delta \mathbb{P}$ -0pen sets by $\mathbb{R}_{\delta_\mathbb{P}}$ are indicated, along with the relationship that was utilized to obtain them. Furthermore, we designate the approximation space $\delta\mathbb{P} $   by $(\mathbb{X},\mathbb{R} _{{\delta \mathbb{P}}})$.

\end{Remark}

\begin{Example}
Assume a universe $\mathbb{X}$ = $\{u_1,u_2,u_3,u_4\}$  and a relation  $\mathbb{R}$ defined as $\mathbb{R} = \{(u_1,u_1),(u_1,u_2)\\,(u_1,u_3),(u_2,u_3),(u_3,u_4)\}$ thus $u_1\mathbb{R}= \{u_1,u_2,u_3\}$, $u_2\mathbb{R} = \{u_3\}$, $u_3\mathbb{R} = \{u_4\}$ as well $u_4\mathbb{R}= \emptyset$. Consequently, the topology related to this relationship is $\tau = \{\emptyset,\mathbb{X},\{u_3\},\{u_4\},\{u_3,u_4\},\{u_1,u_2,\\u_3\}\}$ as well $\delta\mathbb{P}O(\mathbb{X}) = \{\emptyset,\mathbb{X},\{u_1\},\{u_2\},\{u_3\},\{u_4\},\{u_1,u_2\},\{u_1,u_3\},\{u_1,u_4\},\{u_2,u_3\},\{u_2,\\u_4\},\{u_3,u_4\},\{u_1,u_2,u_3\},\{u_1,u_3,u_4\},\{u_1,u_2,u_4\},\{u_2,u_3,u_4\}\}$.is a $\delta\mathbb{P}$- approximation space.

\end{Example}

\begin{Definition}

Assume that ($\mathbb{X},\mathbb{R}_\delta$$_\mathbb{P}$) is $\delta\mathbb{P}$- lower approximation as well $\delta\mathbb{P}$- upper approximation for every nonempty subset $S$ of $\mathbb{X}$, the definition is:
 \begin{itemize}
 
\item 
$\underline{\mathbb{R}}_ {\delta}$$_ \mathbb{P}(S) = \bigcup  \{V : V \in \delta \mathbb{P}O(\mathbb{X}),  V \subseteq S\}$,
\item 
$\overline{\mathbb{R}}_ {\delta}$$ _\mathbb{P}(S) = \bigcap \{W : W \in \delta \mathbb{P}C(X),W \supseteq S\}$.

\end{itemize}

\end{Definition}

\begin{Definition}
    Assume that($\mathbb{X},\mathbb{R}_\delta$$_\mathbb{P}$) is $\delta\mathbb{P}$- accuracy measure of $S$  specified as follows
\[
  \propto_{\mathbb{R}_{\delta \mathbb{P}}}(S) = \frac{\mid \underline{\mathbb{R}}_ {\delta \mathbb{P}}(S)\mid}{\mid\overline{\mathbb{R}} _{\delta \mathbb{P}}(S)\mid},\indent  S \neq \emptyset.
  \]
  
\end{Definition}

\begin{Theorem}
Given any binary relation $\mathbb{R}$ on $\mathbb{X}$, which generates a topological space $(\mathbb{X}, \tau)$, we obtain $\underline{\mathbb{R}}(S) \subseteq \underline{\mathbb{R}}_\mathbb{P}(S) \subseteq \underline{\mathbb{R}}_\delta{}_\mathbb{P}(S) \subseteq S \subseteq \overline{\mathbb{R}}_ \delta{}_\mathbb{P}(S) \subseteq \overline{\mathbb{R}}_\mathbb{P}(S) \subseteq \overline{\mathbb{R}}(S)$. 
\end{Theorem}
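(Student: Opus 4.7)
The plan is to establish the seven inclusions by first proving a containment of the underlying families of open (and, dually, closed) sets, and then reading off the inclusions of the associated approximation operators as a monotonicity consequence.

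First I would verify the chain of family inclusions $\tau \subseteq \mathbb{P}O(\mathbb{X}) \subseteq \delta\mathbb{P}O(\mathbb{X})$. The first containment is Remark~1. For the second, take any $V \in \mathbb{P}O(\mathbb{X})$, so $V \subseteq \mathrm{int}(\mathrm{cl}(V))$; since every open neighborhood $A$ of a point $x$ satisfies $A \subseteq \mathrm{int}(\mathrm{cl}(A))$, a standard unpacking of Definition~2 gives $\mathrm{cl}(V) \subseteq \mathrm{cl}_\delta(V)$, hence $\mathrm{int}(\mathrm{cl}(V)) \subseteq \mathrm{int}(\mathrm{cl}_\delta(V))$ and therefore $V \subseteq \mathrm{int}(\mathrm{cl}_\delta(V))$, i.e.\ $V \in \delta\mathbb{P}O(\mathbb{X})$. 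Taking complements yields the dual chain $\tau^c \subseteq \mathbb{P}C(\mathbb{X}) \subseteq \delta\mathbb{P}C(\mathbb{X})$.

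Next I would handle the two ``central'' inclusions $\underline{\mathbb{R}}_{\delta\mathbb{P}}(S) \subseteq S \subseteq \overline{\mathbb{R}}_{\delta\mathbb{P}}(S)$ directly from Definition~9: the lower approximation is a union of sets contained in $S$, and the upper approximation is an intersection of sets containing $S$. The two adjacent inclusions $\underline{\mathbb{R}}_{\mathbb{P}}(S) \subseteq \underline{\mathbb{R}}_{\delta\mathbb{P}}(S)$ and $\overline{\mathbb{R}}_{\delta\mathbb{P}}(S) \subseteq \overline{\mathbb{R}}_{\mathbb{P}}(S)$ then follow from Step~1 by a monotonicity observation: each preopen subset of $S$ is $\delta\mathbb{P}$-open, so the union in the definition of $\underline{\mathbb{R}}_{\mathbb{P}}$ is taken over a subfamily of that used for $\underline{\mathbb{R}}_{\delta\mathbb{P}}$; dually, each preclosed superset of $S$ is $\delta\mathbb{P}$-closed, so the defining intersection for $\overline{\mathbb{R}}_{\delta\mathbb{P}}$ runs over a \emph{larger} family than that for $\overline{\mathbb{R}}_{\mathbb{P}}$, giving the reversed containment.

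Finally, for the outermost inclusions $\underline{\mathbb{R}}(S) \subseteq \underline{\mathbb{R}}_{\mathbb{P}}(S)$ and $\overline{\mathbb{R}}_{\mathbb{P}}(S) \subseteq \overline{\mathbb{R}}(S)$, I would appeal to the identification of the classical lower/upper approximation with the topological interior/closure in the topology $\tau$ generated from $\mathbb{R}$: namely $\underline{\mathbb{R}}(S) = \mathrm{int}(S) = \bigcup\{V \in \tau : V \subseteq S\}$ and $\overline{\mathbb{R}}(S) = \mathrm{cl}(S) = \bigcap\{W \in \tau^c : W \supseteq S\}$. Combined with $\tau \subseteq \mathbb{P}O(\mathbb{X})$ and $\tau^c \subseteq \mathbb{P}C(\mathbb{X})$, the same monotonicity argument as in Step~3 delivers both inclusions. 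This last step is where I expect the main obstacle to lie, because the theorem is stated for an arbitrary binary relation while the cleanest identification $\underline{\mathbb{R}}(S)=\mathrm{int}(S)$ is standard only when the right-neighborhood family forms a base (as in the reflexive / equivalence case); some care may be needed to justify the inclusion (rather than equality) in the non-reflexive setting, perhaps by showing that $\underline{\mathbb{R}}(S)$ is itself preopen whenever $\mathbb{R}_x \subseteq S$.
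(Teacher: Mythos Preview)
Your proposal is correct and follows essentially the same route as the paper: establish the family inclusions $\tau \subseteq \mathbb{P}O(\mathbb{X}) \subseteq \delta\mathbb{P}O(\mathbb{X})$ (and their duals), and then read off the chain of approximation inclusions by monotonicity of the defining unions and intersections. As for your worry in the final step, the paper simply takes $\underline{\mathbb{R}}(S) = \bigcup\{V \in \tau : V \subseteq S\}$ and $\overline{\mathbb{R}}(S) = \bigcap\{W \in \tau^{c} : W \supseteq S\}$ as the working identifications in this topological setting, so no further justification is supplied (or needed) there.
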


\begin{proof}

$\underline{\mathbb{R}}(S) = \bigcup \{V \in \tau : V \subseteq S\} \subseteq \bigcup \{V \in \mathbb{P}O(X) :  V\subseteq S\} = \underline{\mathbb{R}}_\mathbb{P}(S) \subseteq \bigcup \{V \in \delta \mathbb{P} O(\mathbb{X}) : V \subseteq S\} =\underline{\mathbb{R}}_\delta{}_\mathbb{P}(S) \subseteq S$, that is,  $ \underline{\mathbb{R}}(S) \subseteq \underline{\mathbb{R}} _\mathbb{P}(S) \subseteq \underline{\mathbb{R}}_\delta{}_\mathbb{P}(S) \subseteq S$.\\
Furthermore, $\overline{\mathbb{R}}(S) = \bigcap \{W \in \tau ^ {c} :W \supseteq  S\} \supseteq \bigcap \{W \in \mathbb{P}C(\mathbb{X}) : W \supseteq S\} = \overline{\mathbb{R}}_\mathbb{P}(S) \supseteq \bigcap \{W \in \delta \mathbb{P}C(\mathbb{X}) : W \supseteq S\} = \overline{\mathbb{R}}_\delta{}_\mathbb{P}(S) \supseteq S$, that is, $\overline{\mathbb{R}}(S) \supseteq \overline{\mathbb{R}}_\mathbb{P}(S) \supseteq \overline{\mathbb{R}}_\delta{}_\mathbb{P}(S) \supseteq S$.\\
 Consequently, $ \underline{\mathbb{R}}(S) \subseteq \underline{\mathbb{R}}_\mathbb{P}(S) \subseteq \underline{\mathbb{R}}_\delta{}_\mathbb{P}(S) \subseteq  S  \subseteq \overline{\mathbb{R}}_\delta{}_\mathbb{P}(S) \subseteq \overline{\mathbb{R}}_\mathbb{P}(S) \subseteq \overline{\mathbb{R}}(S)$.
\end{proof}

\begin{Definition}

\label{def_01}

Assume that the  $\delta\mathbb{P}$- approximation space is ($\mathbb{X},\mathbb{R}_\delta$$_\mathbb{P}$). With consider to any $S \subseteq \mathbb{X}$, the universe $\mathbb{X}$ can be divided into 24 areas as follows.

\begin{enumerate}

\item  $\underline{Edg}(S) = S - \underline{\mathbb{R}}(S)$, which is the internal edg of $S$. 
\item 
 $\mathbb{P}\underline{Edg}(S) = S - \underline{\mathbb{R}}{}_\mathbb{P}(S)$, which is the $\mathbb{P}$-internal edg of $S$.
\item 
 $\delta \mathbb{P}\underline{Edg}(S) = S - \underline{\mathbb{R}}_\delta{}_\mathbb{P}(S)$, which is the $\delta\mathbb{P}$- internal edg of $S$ . 
\item 
 $\overline{Edg}(S) = \overline{\mathbb{R}}(S) - S$, which is the external edg of $S$.
\item 
$\mathbb{P}\overline{Edg}(s) =\overline{\mathbb{R}}_\mathbb{P}(S) - S$, which is the $\mathbb{P}$- external edg of $S$. 
\item 
 $\delta \mathbb{P}\overline{Edg}(S) = \overline{\mathbb{R}}_\delta{}_\mathbb{P}(S) - S$, which is the $\delta\mathbb{P}$- external edg of $S$. 
\item 
  b(S) = $\overline{\mathbb{R}}(S) - \underline{\mathbb{R}}(S)$, which is the  boundary of $S$. 

\item
 $\mathbb{P}b(S)$ = $\overline{\mathbb{R}}_\mathbb{P}(S) -\underline{\mathbb{R}}_\mathbb{P}(S)$, which is the $\mathbb{P}$- boundary of $S$.
\item
 $\delta \mathbb{P}b(S) = \overline{\mathbb{R}}_\delta{}_\mathbb{P}(S) - \underline{\mathbb{R}}_\delta{}_\mathbb{P}(S)$, which is the $\delta\mathbb{P}$- boundary of $S$. 
\item 
 $ext(S) = \mathbb{X} - \overline{\mathbb{R}}(S)$, which is the exterior of $S$.
\item 
 $\mathbb{P}ext(S) = \mathbb{X} - \overline{\mathbb{R}}_\mathbb{P}(S)$, which is the $\mathbb{P}$- exterior of $S$.
\item 
 $\delta \mathbb{P}ext(S) = \mathbb{X} - \overline{\mathbb{R}}_\delta{}_\mathbb{P}(S)$, which is the $\delta\mathbb{P}$- exterior of $S$. 
\item
$\overline{\mathbb{R}}(S) - \underline{\mathbb{R}}_\mathbb{P}(S)$.
\item 
$\overline{\mathbb{R}}(S) - \underline{\mathbb{R}}_\delta{}_\mathbb{P}(S)$.
\item 
$\overline{\mathbb{R}}(S) - \overline{\mathbb{R}}_\delta{}_\mathbb{P}(S)$.
\item 
$\overline{\mathbb{R}}_\mathbb{P}(S) - \underline{\mathbb{R}}(S)$.
\item 
$\overline{\mathbb{R}}_\mathbb{P}(S) - \underline{\mathbb{R}}_\delta{}_\mathbb{P}(S)$.
\item 
$\overline{\mathbb{R}}_\mathbb{P}(S) - \overline{\mathbb{R}}_\delta{}_\mathbb{P}(S)$.
\item 
$\underline{\mathbb{R}}_\mathbb{P}(S) - \underline{\mathbb{R}}(S)$.
\item 
$\overline{\mathbb{R}}_\delta{}_\mathbb{P}(S) - \underline{\mathbb{R}}_\mathbb{P}(S)$.
\item 
$\overline{\mathbb{R}}_\delta{}_\mathbb{P}(S) - \underline{\mathbb{R}}(S)$.
\item
$\underline{\mathbb{R}}_\delta{}_\mathbb{P}(S) - \underline{\mathbb{R}}_\mathbb{P}(S)$.
\item 
$\underline{\mathbb{R}}_\delta{}_\mathbb{P}(S) - \underline{\mathbb{R}}(S)$.
\item 
$\overline{\mathbb{R}}(S) - \overline{\mathbb{R}}_\mathbb{P}(S)$.
\end{enumerate}

\end{Definition}

\begin{Remark}
  An extension of the study of approximation space is the study of $\delta\mathbb{P}$ - approximation space (Figure 1). Due to the components of the areas [$ \underline{\mathbb{R}}_\mathbb{P}(S) - \underline{\mathbb{R}}(S)$], [$\underline{\mathbb{R}}_\delta{}_\mathbb{P}(S) - \underline{\mathbb{R}}_\mathbb{P}(S)$], and[$\underline{\mathbb{R}}_\delta{}_\mathbb{P}(S) - \underline{\mathbb{R}}(S)$] will be defined well in $S$, In Pawlak's approximation, however, this point was undefinable. Additionally, the component of the areas [$\overline{\mathbb{R}}(S) -\underline{\mathbb{R}}_\delta{}_\mathbb{P}(S)$],[$\overline{\mathbb{R}}_\mathbb{P}(S) - \overline{\mathbb{R}}_\delta{}_\mathbb{P}(S)$],additionally  [$\overline{\mathbb{R}}(S) - \overline{\mathbb{R}}_\mathbb{P}(S)$]  don't belong in $S$, even though Pawlak's approximation space doesn't provide these components a clear definition.\\
Figure \ref{fig_01} shows the above 24 areas. 
\end{Remark} 

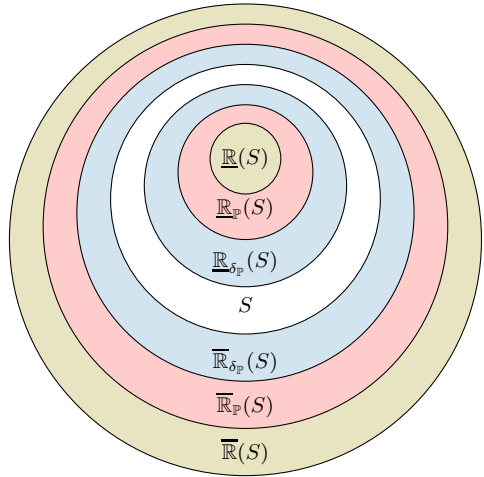
\begin{figure}[h]
\begin{center}
\begin{tikzpicture}[scale=0.9,transform shape]
\node[set,fill=olive!20,text width=7cm,label={[below=180pt of rea,text opacity=1]$\overline{\mathbb{R}}(S)$}] 
  (nat) at (0,-1.2)  (rea) {};
\node[set,fill=red!20,text width=6cm,label={[below=152pt of rea,text opacity=1]$\overline{\mathbb{R}}_{\mathbb{P}}(S)$}] 
  (nat) at (0,-1)  (rea) {};
\node[set,fill=blue!20,text width=5cm,label={[below=125pt of rea,text opacity=1]$\overline{\mathbb{R}}_{\delta_\mathbb{P}}(S)$}] 
  (nat) at (0,-0.8)  (rea) {};
\node[set,fill=white,text width=4cm,label={[below=95pt of rea,text opacity=1]$S$}] 
  (nat) at (0,-0.6)  (rea) {};
\node[set,fill=blue!20,text width=3cm,label={[below=67pt of rea,text opacity=1]$\underline{\mathbb{R}}_{\delta_\mathbb{P}}(S)$}] 
  (nat) at (0,-0.4)  (rea) {};
\node[set,fill=red!20,text width=2cm,label={[below=36pt of int]$\underline{\mathbb{R}}_{\mathbb{P}}(S)$}] 
  (int) at (0,-0.2)  {};
\node[set,fill=olive!20,text width=1cm] (nat) at (0,0) {$\underline{\mathbb{R}}(S)$};
\end{tikzpicture}
\end{center}
\caption{Showing the 24 areas given in Definition \ref{def_01}.}
\label{fig_01}
\end{figure}

\begin{Proposition}
If $S$ is any subset of $\mathbb{X}$, then the following holds for any  $\delta\mathbb{P}$- approximation space$(\mathbb{X},\mathbb{R}_\delta{}_\mathbb{P})$:
\begin{enumerate}

    \item 
b(S) = $\underline{Edg}(S) \cup \overline{Edg}(S)$.

\item
$\delta \mathbb{P}b(S) =\delta \mathbb{P} \underline{Edg}(S) \cup \delta \mathbb{P} \overline{Edg}(S)$.
\end{enumerate}

\begin{proof}
 (2) It the follows from \\
$ \delta \mathbb{P}b(S) = \overline{\mathbb{R}}_\delta{}_\mathbb{P}(S) - \underline{\mathbb{R}}_\delta{}_\mathbb{P}(S)  = (\overline{\mathbb{R}}_\delta{}_\mathbb{P}(S) - S) \cup (S - \underline{\mathbb{R}}_\delta{}_\mathbb{P}(S))
=\delta \mathbb{P}\underline{Edg}(S)  \cup \delta \mathbb{P} \overline{Edg}(S)$.
\end{proof}

\end{Proposition}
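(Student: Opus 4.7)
The plan is to prove both identities by the same elementary set-theoretic observation: whenever $A \subseteq B \subseteq C$, the difference $C \setminus A$ splits as $(C \setminus B) \cup (B \setminus A)$. The inclusion chains required to invoke this identity are already supplied by the Theorem proved earlier in this section, which gives $\underline{\mathbb{R}}(S) \subseteq S \subseteq \overline{\mathbb{R}}(S)$ and also $\underline{\mathbb{R}}_\delta{}_\mathbb{P}(S) \subseteq S \subseteq \overline{\mathbb{R}}_\delta{}_\mathbb{P}(S)$. With these in hand, both parts collapse to a single pattern.

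For part (1), I would start from the definition $b(S) = \overline{\mathbb{R}}(S) - \underline{\mathbb{R}}(S)$ given in Definition~\ref{def_01}. Using $\underline{\mathbb{R}}(S) \subseteq S \subseteq \overline{\mathbb{R}}(S)$, I insert $S$ between the two outer terms to obtain $b(S) = (\overline{\mathbb{R}}(S) - S) \cup (S - \underline{\mathbb{R}}(S))$. By Definition~\ref{def_01}, the first set on the right is $\overline{Edg}(S)$ and the second is $\underline{Edg}(S)$, which gives the claim. Part (2) is the same argument performed with $\underline{\mathbb{R}}_\delta{}_\mathbb{P}$ and $\overline{\mathbb{R}}_\delta{}_\mathbb{P}$ replacing $\underline{\mathbb{R}}$ and $\overline{\mathbb{R}}$, and this is exactly what the author writes.

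There is no real obstacle here beyond making sure the three-way inclusion $\underline{\mathbb{R}}_\delta{}_\mathbb{P}(S) \subseteq S \subseteq \overline{\mathbb{R}}_\delta{}_\mathbb{P}(S)$ is cited correctly, since without it the splitting of $C \setminus A$ into $(C \setminus B) \cup (B \setminus A)$ need not exhaust the difference. Both inclusions, however, follow directly from the preceding Theorem, so each line of the proof is essentially a rewriting step rather than a computation.
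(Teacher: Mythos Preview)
Your proposal is correct and follows exactly the paper's approach: unwind the definition of the boundary as $\overline{\mathbb{R}}_{\delta\mathbb{P}}(S)-\underline{\mathbb{R}}_{\delta\mathbb{P}}(S)$, use the inclusion chain $\underline{\mathbb{R}}_{\delta\mathbb{P}}(S)\subseteq S\subseteq\overline{\mathbb{R}}_{\delta\mathbb{P}}(S)$ to split the difference through $S$, and identify the pieces as the external and internal $\delta\mathbb{P}$-edges. If anything, you make the dependence on the inclusion chain (and the general identity $C\setminus A=(C\setminus B)\cup(B\setminus A)$ for $A\subseteq B\subseteq C$) more explicit than the paper does.
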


\begin{Definition}
Assume that $S \subseteq \mathbb{Y}$ and that ($\mathbb{X},\mathbb{R}_\delta{}_\mathbb{P}$) is a $\delta \mathbb{P}$- approximation space. Then there are the memberships $\underline\in_{\delta{}\mathbb{P}}$ ,$\overline\in_{\delta{}\mathbb{P}}$, which are defined as, $\delta \mathbb{P}$ - strong and $\delta \mathbb{P}$ - weak membership respectively 
\begin{enumerate}
    \item 
     $y$  $\underline\in_{\delta{}\mathbb{P}}  S$ iff $y$ $\in \underline{\mathbb{R}}_{\delta{}\mathbb{P}} (S)$,  
     \item 
     $y$ $\overline\in_{\delta{}\mathbb{P}} S$ iff $y$ $\in \overline{\mathbb{R}}_{\delta{}\mathbb{P}} (S)$.

\end{enumerate}

\end{Definition} 

\begin{Remark}
    As stated by definition 11, $\delta\mathbb{P}$- lower and $\delta\mathbb{P}$- upper approximations of a set $ S \subseteq \mathbb{Y}$ is possible to write as:
    \begin{enumerate}
        \item 
         $\underline{\mathbb{R}}_{\delta{}\mathbb{P}}$ (S) = $\{$y $\in$ S : $ y$ $\underline\in_{\delta{}\mathbb{P}}$ S $\}$,
         \item 
         $\overline{\mathbb{R}}_{\delta{}\mathbb{P}}$ (S) = $\{$y $\in$ S : $y$  $\overline\in_{\delta{}\mathbb{P}}$ S $\}$.
    \end{enumerate}
    
\end{Remark}

\begin{Remark}
\label{rem_011}
    Assume that ($\mathbb{X},\mathbb{R}_\delta{}_\mathbb{P}$) is a $\delta\mathbb{P}$ - approximation space, $S \subseteq \mathbb{Y}$ .Then
    \begin{enumerate}
        \item 
        $ y$  $\underline\in     $S$  \Rightarrow$  $y$  $\underline\in_\mathbb{P}  $S$ \Rightarrow$  $y$  $\underline\in _{\delta{}\mathbb{P}} S$, 
        \item
        $y$ $\overline\in_{\delta{}\mathbb{P} }S \Rightarrow$ $y$ $\overline\in_\mathbb{P} S \Rightarrow$ $y$ $\overline\in S$ .
    \end{enumerate}
    
\end{Remark} 
The converse of Remark \ref{rem_011} It might not always be the case, as demonstrated by the example below.

\begin{Example}
    In example 1. Let $N$ =$\{u_2,u_4\}$,we have $u_2$ $\underline\in _\delta{}_\mathbb{P}$ $N$ but $u_3$ $\underline\notin_\mathbb{P}$ $N$. Let $N$ = $\{u_1,u_3\}$, $u_1$  $\underline\in_\mathbb{P}$ $N$ but  $u_1$ $\underline\notin$ $N$. Let $ N$ =$\{u_1,u_4\}$Then we have $u_2$ $\overline\in$ $N$ but $u_2$ $\overline{\notin}_\mathbb{P}$ $N$. Let $N$ = $\{u_3\}$, $u_1$ $\overline\in _\mathbb{P}$ $N$ but $u_1$ $\overline{\notin}_\delta{}_\mathbb{P}$ $N$.
\end{Example}

\begin{Example}
  We can deduce from example 1 with the following table, which displays the degree of accuracy measure $\propto_{\mathbb{R}}$$(S)$,$\mathbb{P}$ -accuracy measure $\propto_{{\mathbb{R}}{}_\mathbb{P}}$$(S)$ additionally $\delta\mathbb{P}$- accuracy measure $\propto_{{\mathbb{R}}{}_{\delta{}\mathbb{P}}}$$(S)$ for some subset of $\mathbb{X}$.
\end{Example}

\begin{table}[H]
    \centering
    \begin{tabular}{|c|c|c|c|}
    \hline
   \qquad Power Set \qquad  &\qquad $\propto_{\mathbb{R}}$$(S)$ \qquad&\qquad $\propto_{{\mathbb{R}}{}_\mathbb{P}}$$(S)$ \qquad&\qquad $\propto_{{\mathbb{R}}{}_\delta{}_\mathbb{P}}$$(S)$\\ \hline \hline
$\{u_1 \}$&   0 &   0&1\\ \hline
        $\{u_2\}$& 0&0&1 \\ \hline
        $\{u_3\}$& $1/3$&$1/3$&1\\ \hline
        $\{u_4\}$& 1& 1& 1\\  \hline
        $\{u_1,u_2\}$&0&0&1\\  \hline
        $\{u_1,u_3\}$& $1/3$&$2/3$&1\\ \hline
        $\{u_1,u_4\}$&$1/3$& $1/2$&1\\ \hline
        $\{u_2,u_3\}$&$1/3$&$2/3$&1\\ \hline
        $\{u_2,u_4\}$& $1/3$&$1/2$&1\\ \hline
        $\{u_3,u_4\}$&$1/2$&$1/2$&1\\ \hline
        $\{u_1,u_2,u_3\}$&1&1&1\\ \hline
        $\{u_1,u_2,u_4\}$& $1/3$& $1/3$&1\\ \hline
        $\{u_1,u_3,u_4\}$& $1/2$&$3/4$&1\\ \hline
        $\{u_2,u_3,u_4\}$&$1/2$&$3/4$ &1\\ \hline
    \end{tabular}
    \caption{Showing the degree of accuracy measure $\propto_{\mathbb{R}}$$(S) $, $\propto_{{\mathbb{R}}{}_\mathbb{P}}$$(S)$ and $\propto_{{\mathbb{R}}{}_\delta{}_\mathbb{P}}$$(S)$.}
    \label{tab:my_label}
\end{table}

The set $S$ =$\{u_3,u_4\}$  has a degree of precision of 50$\%$ according to the accuracy measure, and 100$\%$ according to the $\delta\mathbb{P}$  - accuracy measure. Furthermore, the set $N$=$\{u_1,u_3,u_4\}$  according to $\mathbb{P}$ - accuracy measure equal to 75$\%$ and according to $\delta\mathbb{P}$ - accuracy measure equal to 100$\%$.Thus, $\delta\mathbb{P}$-accuracy measures are superior to accuracy and $\mathbb{P}$-accuracy metrics.\\ We study $\delta\mathbb{P}$- rough inclusion, using the rough inclusion method that Novotny and Pawlak developed in \cite{19,20}.

\begin{Definition}
     Assume that ($\mathbb{X},\mathbb{R}_\delta{}_\mathbb{P})$ is a $\delta\mathbb{P}$  approximation space  where $S$,$N$ $\subseteq \mathbb{X}$. Then we state:
     \begin{enumerate}
     
     \item
     $S$ is $\delta \mathbb{P}$- roughly bottom  included in $N$ if  $\underline{\mathbb{R}}_{{\delta}\mathbb{P}}(S) \subseteq \underline{\mathbb{R}}_{\delta{}\mathbb{P}}(N)$,
     
     \item 
     $S$ is $\delta \mathbb{P}$- roughly top included in $N$ if $\overline{\mathbb{R}}_{\delta\mathbb{P}}(S) \subseteq \overline{\mathbb{R}}_{\delta{}\mathbb{P}}(N)$,
     
     \item 
     $S$ is $\delta \mathbb{P}$- roughly included in $N$ if (1) and (2).
     \end{enumerate}
     
\end{Definition}

\begin{Example}
 As shown in Example 1, $\{u_2,u_4\}$  is $\delta\mathbb{P}$- roughly bottom included in $\{u_1,u_2,u_4\}$. \\Furthermore $\{u_2,u_4\}$ is $\delta\mathbb{P}$- roughly top included in $\{u_1,u_2,u_4\}$ .  Additionally $\{u_2,u_4\}$ is $\delta\mathbb{P}$- roughly included in $\{u_1,u_2,u_4\}.$
\end{Example}

\begin{Definition}
    Assume that ($\mathbb{X},\mathbb{R}_{\delta{}\mathbb{P}}$) is $\delta\mathbb{P}$- approximation space, a subset $S$ of $\mathbb{X}$ is referred to as
    \begin{enumerate}
    
    \item
    
    $\mathbb{R}_{\delta{}\mathbb{P}}$- definable ($\delta\mathbb{P}$- exact) when $\overline{\mathbb{R}}_{\delta{}\mathbb{P}}(S) = \underline{\mathbb{R}}_{\delta{}\mathbb{P}}(S)$,
    
    \item 
    $\delta{}\mathbb{P}$ - rough when $\overline{\mathbb{R}}_{\delta{}\mathbb{P}}(S) \neq \underline{\mathbb{R}}_{\delta{}\mathbb{P}}(S).$
    
\end{enumerate}

\end{Definition}

\begin{Example}
    For any $\delta\mathbb{P}$- approximation space ($\mathbb{X},\mathbb{R}_{\delta{}\mathbb{P}}$) as in Example 1.  We have the set $\{u_2,u_3,u_4\}$ is $\delta\mathbb{P}$- exact. 
\end{Example}

\begin{Definition}
    The subset $S \subseteq \mathbb{X}$ of any $\delta\mathbb{P}$- approximation space ($\mathbb{X},\mathbb{R}_{\delta{}\mathbb{P}})$ is referred to as;  
    \begin{enumerate}
    \item
    If $\underline{\mathbb{R}}_{\delta{}\mathbb{P}}(S) \neq \emptyset$ and $\overline{\mathbb{R}}_{\delta{}\mathbb{P}}(X) \neq \mathbb{X}$, then roughly $\mathbb{R}_{\delta{}\mathbb{P}}$- definable,  and indicated by  $\delta \mathbb{P}RD(\mathbb{X})$,
    
    \item 
     If $\underline{\mathbb{R}}_{\delta{}\mathbb{P}}(S) = \emptyset$ and $\overline{\mathbb{R}}_{\delta{}\mathbb{P}}(S) \neq \mathbb{X}$, then internally $\mathbb{R}_{\delta{}\mathbb{P}}$- undefinable, and indicated by  $\delta \mathbb{P}IUD(\mathbb{X})$,
    
    \item
    if $\underline{\mathbb{R}}_{\delta{}\mathbb{P}}(S) \neq \emptyset$ and $\overline{\mathbb{R}}_{\delta{}\mathbb{P}}(S) = \mathbb{X}$,then externally $\mathbb{R}_{\delta{}\mathbb{P}}$- undefinable, and indicated by $\delta \mathbb{P}EUD(\mathbb{X})$,
    
    \item 
    If $\underline{\mathbb{R}}_{\delta{}\mathbb{P}}(S) = \emptyset$ and $\overline{\mathbb{R}}_{\delta{}\mathbb{P}}(S) = \mathbb{X}$, then totally $\mathbb{R}_{\delta{}\mathbb{P}}$- undefinable, and indicated by $\delta \mathbb{P}TUD(\mathbb{X})$.
    \end{enumerate}
    
\end{Definition}

\begin{Remark}
  Assume that ($\mathbb{Y},\mathbb{R}_\delta{}_\mathbb{P}$) is a $\delta \mathbb{P}$- approximation space. These are on hold :
  \begin{enumerate}
    
  \item 
  $\delta \mathbb{P}RD(\mathbb{Y}) \supseteq \mathbb{P} RD(\mathbb{Y}) \supseteq RD(\mathbb{Y})$,
  
  \item 
  $\delta \mathbb{P}IUD(\mathbb{Y}) \subseteq \mathbb{P} IUD(\mathbb{Y}) \subseteq IUD(\mathbb{Y})$,
  \item 
  $\delta \mathbb{P}EUD(\mathbb{Y}) \subseteq \mathbb{P} EUD(\mathbb{Y}) \subseteq EUD(\mathbb{Y})$,
  \item
  $\delta \mathbb{P}TUD(\mathbb{Y}) \subseteq \mathbb{P} TUD(\mathbb{Y}) \subseteq TUD(\mathbb{Y})$.
  
  \end{enumerate}
  
\end{Remark}

\begin{Example}
    As shown in Example 1, the set $\{u_1,u_2\}$ $\in$ $\delta\mathbb{P}RD(\mathbb{X})$ but $\{u_1,u_2\}$ $\notin$ $\mathbb{p}RD(\mathbb{X})$.The set $\{u_2\}$ $\in$ $\mathbb{P} IUD(\mathbb{X})$ but $\{u_2\}$ $\notin $ $\delta \mathbb{P}IUD(\mathbb{X})$. The set $\{u_2\}$ $\in$ $\mathbb{P} IUD(\mathbb{X})$ and $\{u_2\}$ $\in$ $\delta{}\mathbb{P}IUD(\mathbb{X})$. the set $\{u_1,u_3,u_4\}$ $\in$ $\mathbb{P}EUD(\mathbb{X})$ but $\{u_1,u_3,u_4\}$ $\notin$ $\delta\mathbb{P}EUD(\mathbb{X})$.
\end{Example} 

\begin{Proposition}
    Assume that the  $\delta\mathbb{P}$ - approximation space is ($\mathbb{Y},\mathbb{R}_\delta{}_\mathbb{P}$). After that 
    \begin{enumerate}
    \item 
    Each $\mathbb{P}$ - exact set in $\mathbb{Y}$ is $\delta\mathbb{P}$ - exact
    
    \item 
    Each $\delta\mathbb{P}$ - rough set in $\mathbb{Y}$ is $\mathbb{P}$ - rough
    \end{enumerate}
    
    \begin{proof}
     Evident.\\
    As demonstrated by the example that follows, the converse of every part of proposition 3, might not always hold.
    \end{proof}
\end{Proposition}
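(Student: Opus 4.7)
The plan is to use the inclusion chain already established in Theorem~1, which states
\[
\underline{\mathbb{R}}(S) \subseteq \underline{\mathbb{R}}_\mathbb{P}(S) \subseteq \underline{\mathbb{R}}_{\delta\mathbb{P}}(S) \subseteq S \subseteq \overline{\mathbb{R}}_{\delta\mathbb{P}}(S) \subseteq \overline{\mathbb{R}}_\mathbb{P}(S) \subseteq \overline{\mathbb{R}}(S).
\]
This chain is the decisive tool: whenever the two outer approximations (at the $\mathbb{P}$-level) coincide, every set sandwiched between them must coincide as well.

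For part (1), I would assume $S$ is $\mathbb{P}$-exact, i.e., $\underline{\mathbb{R}}_\mathbb{P}(S) = \overline{\mathbb{R}}_\mathbb{P}(S)$. Reading off the portion
\[
\underline{\mathbb{R}}_\mathbb{P}(S) \subseteq \underline{\mathbb{R}}_{\delta\mathbb{P}}(S) \subseteq \overline{\mathbb{R}}_{\delta\mathbb{P}}(S) \subseteq \overline{\mathbb{R}}_\mathbb{P}(S)
\]
of the chain, the equality of the two ends forces the whole string to collapse, yielding $\underline{\mathbb{R}}_{\delta\mathbb{P}}(S) = \overline{\mathbb{R}}_{\delta\mathbb{P}}(S)$, which is the definition of $\delta\mathbb{P}$-exactness.

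For part (2), I would simply take the contrapositive of (1): if $S$ is $\delta\mathbb{P}$-rough, then $S$ cannot be $\mathbb{P}$-exact (otherwise part (1) would give $\delta\mathbb{P}$-exactness, contradicting our assumption), so $S$ must be $\mathbb{P}$-rough.

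There is no genuine obstacle here beyond citing Theorem~1 correctly; both claims follow at once from the monotonicity in the approximation hierarchy. The only care needed is to verify that the definitions of exact/rough used are exactly $\overline{\mathbb{R}}_{\star}(S) = \underline{\mathbb{R}}_{\star}(S)$ versus $\overline{\mathbb{R}}_{\star}(S) \neq \underline{\mathbb{R}}_{\star}(S)$ at both the $\mathbb{P}$- and $\delta\mathbb{P}$-levels, which is already laid out in Definition~14. Hence the entire proof reduces to one application of the sandwich inclusions plus a contrapositive, which is why the authors are justified in writing ``Evident.''
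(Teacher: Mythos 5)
Your proof is correct and is precisely the argument the authors leave implicit behind the word ``Evident'': the sandwich $\underline{\mathbb{R}}_\mathbb{P}(S) \subseteq \underline{\mathbb{R}}_{\delta\mathbb{P}}(S) \subseteq \overline{\mathbb{R}}_{\delta\mathbb{P}}(S) \subseteq \overline{\mathbb{R}}_\mathbb{P}(S)$ from Theorem~1 collapses when the outer terms coincide, and part (2) is the contrapositive. The only cosmetic caveat is that the paper never formally states the definition of $\mathbb{P}$-exact (only the $\delta\mathbb{P}$ version appears explicitly), but your reading of it as $\underline{\mathbb{R}}_\mathbb{P}(S)=\overline{\mathbb{R}}_\mathbb{P}(S)$ is the intended one.
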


\begin{Example}
    Consider ($\mathbb{X},\mathbb{R}_\delta{}_\mathbb{P}$) as an $\delta\mathbb{P}$ - approximation space  for in example 1. Consequently the subset $\{u_1,u_2\}$ is $\delta\mathbb{P}$- exact but not $\mathbb{P}$-exact, while $\{u_2,u_3,u_4\}$ is P-rough but not $\delta\mathbb{P}$-rough.
\end{Example}

\begin{Proposition}
 Assuming that $S$,$N$ $\subseteq \mathbb{X}$  and any $\delta\mathbb{P}$- approximation space ($\mathbb{X},\mathbb{R}_{\delta{}\mathbb{P}}$).Next
\begin{enumerate}
\item 
$\underline{\mathbb{\mathbb{R}}}_{\delta{}\mathbb{P}}(S) \subseteq S \subseteq \overline{\mathbb{R}}_{\delta\mathbb{P}}(S)$,
\item
$\underline{\mathbb{R}}_\delta{}_\mathbb{P}(\emptyset)=\overline{\mathbb{R}}_\delta{}_\mathbb{P}(\emptyset) = \emptyset, \underline{\mathbb{R}}_\delta{}_\mathbb{P}(\mathbb{X}) = \overline{\mathbb{R}}_\delta{}_\mathbb{P}(\mathbb{X}) = \mathbb{X}$,
\item
If $S \subseteq N $ then $\underline{\mathbb{R}}_\delta{}_\mathbb{P}(S) \subseteq \underline{\mathbb{R}}_\delta{}_\mathbb{P}(N)$ and $\overline{\mathbb{R}}_\delta{}_\mathbb{P}(S) \subseteq \overline{\mathbb{R}}_\delta{}_\mathbb{P}(N)$
\end{enumerate}

\begin{proof}
{\color{white}.}

\begin{enumerate}
    \item
    Assume $x \in \underline{\mathbb{R}}_\delta{}_\mathbb{P}(S)$, meaning that $x \in \bigcap \{V \in \delta \mathbb{P}O(\mathbb{X}),V \subseteq S\}$. And after that, there $V_0$ $\in \delta \mathbb{P}O(\mathbb{X})$ in such a manner that $x \in $ V$_0$ $\subseteq S$. So $x \in S$, therefore $\underline{\mathbb{R}}_\delta{}_\mathbb{P}(S) \subseteq S$, furthermore, assume $x \in \mathbb{X}$ additionally, by definition of $\overline{\mathbb{R}}_\delta{}_\mathbb{P}(S) = \bigcup\{W \in \delta \mathbb{P}C(\mathbb{X}), S \subseteq W\}$, then x $\in$ W for everyone $W \in$ $\delta \mathbb{P} C(\mathbb{X})$. Therefore $ S \subseteq \overline{\mathbb{R}}_\delta{}_\mathbb{P}(S)$.
    \item  
    Adheres directly.
    \item 
    Assume $ x \in \underline{\mathbb{R}}_\delta{}_\mathbb{P}(S)$, meaning that $x \in \bigcup \{V \in  \delta \mathbb{P}O(\mathbb{X}), V \subseteq S\}$ however $S \subseteq N$, so $V \subseteq N$ and $x \in V$, then $x \in \underline{\mathbb{R}}_\delta{}_\mathbb{P}(N)$. Additionally let $ x \notin \overline{\mathbb{R}}_\delta{}_\mathbb{P}(N) $ this implies that $x \notin \bigcap \{W \in \delta \mathbb{P}C(\mathbb{X}), N \subseteq W\}$ afterward, there are  $W \in \delta \mathbb{P}C(\mathbb{X}), N \subseteq W$ and $x \notin W$  meaning that, there is $W \in \delta \mathbb{P}C(\mathbb{X}), S \subseteq N \subseteq W$ and  $x \notin W$ which suggest $x \notin \bigcap\{W \in \delta \mathbb{P}C(\mathbb{X}), S \subseteq W\}$, so $x \notin \overline{\mathbb{R}}_\delta{}_\mathbb{P}(S)$. Consequently $\overline{\mathbb{R}}_\delta{}_\mathbb{P}(S) \subseteq \overline{\mathbb{R}}_\delta{}_\mathbb{P}(N)$.
\end{enumerate}
\end{proof}
\end{Proposition}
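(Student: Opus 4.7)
My plan is to exploit, for each of the three parts, the explicit set-theoretic form of the $\delta\mathbb{P}$-lower and $\delta\mathbb{P}$-upper approximations given in Definition~8, namely $\underline{\mathbb{R}}_{\delta\mathbb{P}}(S)=\bigcup\{V\in\delta\mathbb{P}O(\mathbb{X}):V\subseteq S\}$ and $\overline{\mathbb{R}}_{\delta\mathbb{P}}(S)=\bigcap\{W\in\delta\mathbb{P}C(\mathbb{X}):W\supseteq S\}$. Since everything reduces to union/intersection bookkeeping on the underlying families, there is no real analytic obstacle; the proof is essentially three short monotonicity arguments.

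For part (1), I would argue directly from these expressions. Any element of $\underline{\mathbb{R}}_{\delta\mathbb{P}}(S)$ lies in some $V\in\delta\mathbb{P}O(\mathbb{X})$ with $V\subseteq S$, so it lies in $S$; hence $\underline{\mathbb{R}}_{\delta\mathbb{P}}(S)\subseteq S$. For the other inclusion I would take $x\in S$ and observe that for every $W\in\delta\mathbb{P}C(\mathbb{X})$ with $S\subseteq W$ one automatically has $x\in W$, so $x$ belongs to the intersection defining $\overline{\mathbb{R}}_{\delta\mathbb{P}}(S)$.

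For part (2), the key observation is that $\emptyset$ and $\mathbb{X}$ are both $\delta\mathbb{P}$-open and $\delta\mathbb{P}$-closed: indeed, $\emptyset\subseteq\mathrm{int}(cl_\delta(\emptyset))=\emptyset$ and $\mathbb{X}\subseteq\mathrm{int}(cl_\delta(\mathbb{X}))=\mathbb{X}$, and their complements are, respectively, $\mathbb{X}$ and $\emptyset$. Using this, the only $\delta\mathbb{P}$-open subset of $\emptyset$ is $\emptyset$, so $\underline{\mathbb{R}}_{\delta\mathbb{P}}(\emptyset)=\emptyset$, and $\emptyset$ is itself a $\delta\mathbb{P}$-closed superset of $\emptyset$, forcing $\overline{\mathbb{R}}_{\delta\mathbb{P}}(\emptyset)=\emptyset$. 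Symmetrically, $\mathbb{X}\in\delta\mathbb{P}O(\mathbb{X})$ is contained in $\mathbb{X}$ and every $\delta\mathbb{P}$-closed set containing $\mathbb{X}$ is $\mathbb{X}$, which yields $\underline{\mathbb{R}}_{\delta\mathbb{P}}(\mathbb{X})=\overline{\mathbb{R}}_{\delta\mathbb{P}}(\mathbb{X})=\mathbb{X}$.

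For part (3), I would use monotonicity of the defining families under inclusion. If $S\subseteq N$, then every $V\in\delta\mathbb{P}O(\mathbb{X})$ with $V\subseteq S$ also satisfies $V\subseteq N$, so the union defining $\underline{\mathbb{R}}_{\delta\mathbb{P}}(S)$ is taken over a subfamily of the one defining $\underline{\mathbb{R}}_{\delta\mathbb{P}}(N)$, giving the desired inclusion. Dually, every $W\in\delta\mathbb{P}C(\mathbb{X})$ containing $N$ also contains $S$, so the intersection defining $\overline{\mathbb{R}}_{\delta\mathbb{P}}(N)$ is taken over a subfamily of the one defining $\overline{\mathbb{R}}_{\delta\mathbb{P}}(S)$, which (intersections reverse inclusions under shrinking of the index set) gives $\overline{\mathbb{R}}_{\delta\mathbb{P}}(S)\subseteq\overline{\mathbb{R}}_{\delta\mathbb{P}}(N)$. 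The only mild pitfall I anticipate is keeping the direction of the inclusion straight when passing from "fewer closed sets in the intersection" to "larger intersection," but this is just the standard remark that $\mathcal{F}_1\subseteq\mathcal{F}_2\Rightarrow\bigcap\mathcal{F}_2\subseteq\bigcap\mathcal{F}_1$.
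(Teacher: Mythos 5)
Your proof is correct and follows essentially the same route as the paper's: all three parts are read off directly from the union/intersection form of the $\delta\mathbb{P}$-lower and $\delta\mathbb{P}$-upper approximations. If anything, yours is slightly tighter — you actually justify part (2), which the paper dismisses as immediate, and your family-monotonicity phrasing of part (3) replaces the paper's contrapositive element-chase while also avoiding the $\bigcup$/$\bigcap$ slips that appear in the paper's own write-up of part (1).
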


\begin{Proposition}
Assuming that $S$,$N$ $\subseteq \mathbb{X}$  and any $\delta\mathbb{P}$- approximation space ($\mathbb{X},\mathbb{R}_\delta{}_\mathbb{P}$).Next:

\begin{enumerate}
    \item $\underline{\mathbb{R}}_\delta{}_\mathbb{P}(\mathbb{X} \backslash S) = \mathbb{X} \backslash \overline{\mathbb{R}}_\delta{}_\mathbb{P}(S)$,

    \item 
    $\overline{\mathbb{R}}_\delta{}_\mathbb{P}(\mathbb{X} \backslash S) = \mathbb{X}  \backslash \underline{\mathbb{R}}_\delta{}_\mathbb{P}(S)$,

    \item $\underline{\mathbb{R}}_\delta{}_\mathbb{P}(\underline{\mathbb{R}}_\delta{}_\mathbb{P}(S)) = \underline{\mathbb{R}}_\delta{}_\mathbb{P}(S)$,

    \item $\overline{\mathbb{R}}_\delta{}_\mathbb{P}(\overline{\mathbb{R}}_\delta{}_\mathbb{P}(S)) = \overline{\mathbb{\mathbb{R}}}_\delta{}_\mathbb{P}(S)$,

    \item $\underline{\mathbb{R}}_\delta{}_\mathbb{P}(\underline{\mathbb{R}}_\delta{}_\mathbb{P}(S)) \subseteq \overline{\mathbb{R}}_\delta{}_\mathbb{P}(\underline{\mathbb{R}}_\delta{}_\mathbb{P}(S))$,

    \item $\underline{\mathbb{R}}_\delta{}_\mathbb{P}(\overline{\mathbb{R}}_\delta{}_\mathbb{P}(S)) \subseteq \overline{\mathbb{R}}_\delta{}_\mathbb{P}(\overline{\mathbb{R}}_\delta{}_\mathbb{P}(S)$.
\end{enumerate}
\end{Proposition}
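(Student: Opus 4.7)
The six parts split into three groups: (1)--(2) are De Morgan dualities, (3)--(4) are idempotence properties, and (5)--(6) are trivial consequences of part (1) of Proposition 5. The central technical fact I want to isolate at the outset is that $\delta\mathbb{P}O(\mathbb{X})$ is closed under arbitrary unions. Indeed, if $\{V_i\}_{i\in I}\subseteq \delta\mathbb{P}O(\mathbb{X})$, then for each $i$ we have $V_i\subseteq \operatorname{int}(\operatorname{cl}_\delta(V_i))\subseteq \operatorname{int}(\operatorname{cl}_\delta(\bigcup_j V_j))$ by monotonicity, so $\bigcup_i V_i\subseteq \operatorname{int}(\operatorname{cl}_\delta(\bigcup_i V_i))$, meaning the union is $\delta\mathbb{P}$-open. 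Complementing, $\delta\mathbb{P}C(\mathbb{X})$ is closed under arbitrary intersections. Consequently, by the Definition of the approximation operators, $\underline{\mathbb{R}}_{\delta\mathbb{P}}(S)\in \delta\mathbb{P}O(\mathbb{X})$ and $\overline{\mathbb{R}}_{\delta\mathbb{P}}(S)\in \delta\mathbb{P}C(\mathbb{X})$ for every $S\subseteq \mathbb{X}$.

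\textbf{Parts (1) and (2).} I would rewrite
$$\underline{\mathbb{R}}_{\delta\mathbb{P}}(\mathbb{X}\setminus S)=\bigcup\{V\in \delta\mathbb{P}O(\mathbb{X}):V\subseteq \mathbb{X}\setminus S\}$$
and observe that $V\subseteq \mathbb{X}\setminus S$ iff $S\subseteq \mathbb{X}\setminus V$, and $V\in \delta\mathbb{P}O(\mathbb{X})$ iff $W:=\mathbb{X}\setminus V\in \delta\mathbb{P}C(\mathbb{X})$. Substituting $V=\mathbb{X}\setminus W$ and pulling the complement outside the union via De Morgan gives $\mathbb{X}\setminus \bigcap\{W\in \delta\mathbb{P}C(\mathbb{X}):S\subseteq W\}=\mathbb{X}\setminus \overline{\mathbb{R}}_{\delta\mathbb{P}}(S)$, which is (1). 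Part (2) follows by the symmetric argument, or equivalently by applying (1) to $\mathbb{X}\setminus S$ and complementing.

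\textbf{Parts (3) and (4).} By the observation in the first paragraph, $A:=\underline{\mathbb{R}}_{\delta\mathbb{P}}(S)$ is itself $\delta\mathbb{P}$-open. Then in the defining union $\underline{\mathbb{R}}_{\delta\mathbb{P}}(A)=\bigcup\{V\in \delta\mathbb{P}O(\mathbb{X}):V\subseteq A\}$, the set $A$ itself occurs as a term, so $\underline{\mathbb{R}}_{\delta\mathbb{P}}(A)\supseteq A$; combined with the general inclusion $\underline{\mathbb{R}}_{\delta\mathbb{P}}(A)\subseteq A$ from Proposition 5(1), equality holds. Part (4) is dual: $B:=\overline{\mathbb{R}}_{\delta\mathbb{P}}(S)$ is $\delta\mathbb{P}$-closed, hence appears as a term in the intersection defining $\overline{\mathbb{R}}_{\delta\mathbb{P}}(B)$, forcing $\overline{\mathbb{R}}_{\delta\mathbb{P}}(B)\subseteq B$, and the reverse inclusion is again Proposition 5(1).

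\textbf{Parts (5) and (6).} These are immediate from Proposition 5(1) applied to $A=\underline{\mathbb{R}}_{\delta\mathbb{P}}(S)$ and $A=\overline{\mathbb{R}}_{\delta\mathbb{P}}(S)$ respectively: in each case the chain $\underline{\mathbb{R}}_{\delta\mathbb{P}}(A)\subseteq A\subseteq \overline{\mathbb{R}}_{\delta\mathbb{P}}(A)$ contains the desired inclusion as its outer endpoints. The only real obstacle in the whole proposition is the preliminary lemma that $\delta\mathbb{P}$-open sets are closed under arbitrary unions; once that is in hand, everything else is bookkeeping with De Morgan and Proposition 5(1).
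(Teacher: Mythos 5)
Your proof is correct, but it is organized around a different pivot than the paper's. You isolate, up front, the lemma that $\delta\mathbb{P}O(\mathbb{X})$ is closed under arbitrary unions (your monotonicity argument $V_i\subseteq \operatorname{int}(\operatorname{cl}_\delta(V_i))\subseteq \operatorname{int}(\operatorname{cl}_\delta(\bigcup_j V_j))$ is valid, since both $\operatorname{cl}_\delta$ and $\operatorname{int}$ are monotone), and from it you get that $\underline{\mathbb{R}}_{\delta\mathbb{P}}(S)$ is itself $\delta\mathbb{P}$-open and $\overline{\mathbb{R}}_{\delta\mathbb{P}}(S)$ is $\delta\mathbb{P}$-closed; parts (3) and (4) then fall out because each set appears as a term in its own defining union or intersection. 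The paper never states this closure property: for (3) it instead manipulates the defining union directly, observing that every $\delta\mathbb{P}$-open $V\subseteq S$ already satisfies $V\subseteq\underline{\mathbb{R}}_{\delta\mathbb{P}}(S)$, and it obtains (4) by complementation from (1), (2) and (3) rather than by a dual argument. For (1) the paper chases an element through one inclusion only and then asserts equality, whereas your De Morgan substitution $V\mapsto\mathbb{X}\setminus V$ gives a genuine bijection between the $\delta\mathbb{P}$-open subsets of $\mathbb{X}\setminus S$ and the $\delta\mathbb{P}$-closed supersets of $S$, so both inclusions come for free --- your version is tighter on that point. Parts (5) and (6) are handled essentially identically in both treatments (the paper routes them through (3) and (4), which you correctly observe is not even needed). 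The trade-off: your route buys a structural fact (lower approximations are $\delta\mathbb{P}$-open, upper approximations are $\delta\mathbb{P}$-closed) that is reusable elsewhere, at the cost of having to verify the union-closure lemma; the paper's route avoids that lemma but leaves the reverse inclusion in (1) unargued.
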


\begin{proof}
{\color{white}.}

\begin{enumerate}
    \item Assume $x \in \underline{\mathbb{R}}_\delta{}_\mathbb{P}(\mathbb{X} \backslash  S)$  meaning that $ x \in \bigcup \{V \in \delta \mathbb{P}O(\mathbb{X}), V \subseteq \mathbb{X} \backslash S\}$. Consequently, there V$_0$ $\in \delta \mathbb{P}O(\mathbb{X})$ in such a manner that $ x \in$ V$_0$ $\subseteq \mathbb{X} \backslash S$. And after that, there  $V^c_0$ such that $S \subset$ $V ^ c _0$, $V^c_0$ $\in \delta \mathbb{P}C(\mathbb{X})$. So $x \notin \overline{\mathbb{R}}_\delta{}_\mathbb{P}(S)$. So $x \in \mathbb{X} \backslash \overline{\mathbb{R}}_\delta{}_\mathbb{P}(S)$. Consequently $\underline{\mathbb{R}}_\delta{}_\mathbb{P}(\mathbb{X} \backslash S) = \mathbb{X} \backslash \overline{\mathbb{R}}_\delta{}_\mathbb{P}(S)$.

    \item Comparable to (1).

    \item  By definition $\underline{\mathbb{R}}_\delta{}_\mathbb{P}(S) = \bigcup\{V \in \delta \mathbb{P}O(\mathbb{X}), V \subseteq S\}$,That suggests that $\underline{\mathbb{R}}_\delta{}_\mathbb{P}(\underline{\mathbb{R}}_\delta{}_\mathbb{P}(S))$ = $\bigcup \{ V \in \delta \mathbb{P}O(\mathbb{X}), V \subseteq \underline{\mathbb{R}}_\delta{}_\mathbb{P}(S) \subseteq S\}\}$ = $\bigcup\{V \in \delta \mathbb{P}O(\mathbb{X}),V \subseteq S\}$ = $\underline{\mathbb{R}}_\delta{}_\mathbb{P}(S)$.

    \item $\overline{\mathbb{R}}_{\delta \mathbb{P}}(\overline{\mathbb{R}}_{\delta\mathbb{P} }(S)) = \overline{\mathbb{R}}_{\delta \mathbb{P}}(\mathbb{X} \backslash \underline{\mathbb{R}}_{\delta \mathbb{P}}(\mathbb{X} \backslash S)) = \mathbb{X} \backslash \underline{\mathbb{R}}_{\delta \mathbb{P}}(\mathbb{X} \backslash \underline{\mathbb{R}}_{\delta \mathbb{P}}(\mathbb{X} \backslash S))$. From (1), (2) and (3), we get $\overline{\mathbb{R}}_{\delta \mathbb{P}}(\overline{\mathbb{R}}_{\delta \mathbb{P}}(S)) = \mathbb{X} \backslash \underline{\mathbb{R}}_{\delta \mathbf{P}}(\mathbb{X} \backslash S) = \mathbb{X} \backslash (\mathbb{X} \backslash \overline{\mathbb{R}}_{\delta \mathbb{P}}(S)) = \overline{\mathbb{R}}_{\delta \mathbb{P}}(S)$.

    \item  Since $\underline{\mathbb{R}}_\delta{}_\mathbb{P}(S) \subseteq \overline{\mathbb{R}}_\delta{}_\mathbb{P}(\underline{\mathbb{R}}_\delta{}_\mathbb{P}(S))$ and by (3) we have $\underline{\mathbb{R}}_\delta{}_\mathbb{P}(\underline{\mathbb{R}}_\delta{}_\mathbb{P}(S)) = \underline{\mathbb{R}}_\delta{}_\mathbb{P}(S)$, then $\underline{\mathbb{R}}_\delta{}_\mathbb{P}(\underline{\mathbb{R}}_\delta{}_\mathbb{P}(S)) \subseteq \overline{\mathbb{R}}_\delta{}_\mathbb{P}(\underline{\mathbb{R}}_\delta{}_\mathbb{P}(S))$.

    \item Since $\underline{\mathbb{R}}_\delta{}_\mathbb{P}(\overline{\mathbb{R}}_\delta{}_\mathbb{P}(S)) \subseteq \overline{\mathbb{\mathbb{R}}}_\delta{}_\mathbb{P}(S)$ and by (4), we have $\overline{\mathbb{R}}_\delta{}_\mathbb{P}(\overline{\mathbb{R}}_\delta{}_\mathbb{P}(S)) = \overline{\mathbb{R}}_\delta{}_\mathbb{P}(S)$,then $\underline{\mathbb{R}}_\delta{}_\mathbb{P}(\overline{\mathbb{R}}_\delta{}_\mathbb{P}(S)) \subseteq \overline{\mathbb{R}}_\delta{}_\mathbb{P}(\overline{\mathbb{R}}_\delta{}_\mathbb{P}(S))$.
\end{enumerate}
\end{proof}

\begin{Proposition}
 Assume that ($\mathbb{X},\mathbb{R}_\delta{}_\mathbb{P}$) is a $\delta\mathbb{P}$- approximation apace and $S$,$N \subseteq \mathbb{X}$. Then 

 \begin{enumerate}
     \item $\underline{\mathbb{R}}_\delta{}_\mathbb{P} (S \cup N) \supseteq \underline{\mathbb{R}}_\delta{}_\mathbb{P}(S) \cup \underline{\mathbb{R}}_\delta{}_\mathbb{P}(N)$,

     \item $\overline{\mathbb{R}}_\delta{}_\mathbb{P}(S \cup N) \supseteq \overline{\mathbb{R}}_\delta{}_\mathbb{P}(S) \cup \overline{\mathbb{R}}_\delta{}_\mathbb{P}(N)$,

     \item $\underline{\mathbb{R}}_\delta{}_\mathbb{P}(S \cap N) \subseteq \underline{\mathbb{R}}_\delta{}_\mathbb{P}(S) \cap \underline{\mathbb{R}}_\delta{}_\mathbb{P}(N)$,

     \item $\overline{\mathbb{R}}_\delta{}_\mathbb{P}(S \cap N) \subseteq \overline{\mathbb{R}}_\delta{}_\mathbb{P}(S) \cap \overline{\mathbb{R}}_\delta{}_\mathbb{P}(N)$.
 \end{enumerate}
\end{Proposition}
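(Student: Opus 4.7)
The plan is to derive all four containments as immediate corollaries of the monotonicity property already established in Proposition~4(3), which states that $S \subseteq N$ implies $\underline{\mathbb{R}}_{\delta\mathbb{P}}(S) \subseteq \underline{\mathbb{R}}_{\delta\mathbb{P}}(N)$ and $\overline{\mathbb{R}}_{\delta\mathbb{P}}(S) \subseteq \overline{\mathbb{R}}_{\delta\mathbb{P}}(N)$. No further unfolding of the definitions through $\delta\mathbb{P}$-open/closed families should be needed, which makes the argument uniformly short.

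For parts (1) and (2), I would begin from the trivial inclusions $S \subseteq S \cup N$ and $N \subseteq S \cup N$. Applying Proposition~4(3) to each gives $\underline{\mathbb{R}}_{\delta\mathbb{P}}(S) \subseteq \underline{\mathbb{R}}_{\delta\mathbb{P}}(S \cup N)$ and $\underline{\mathbb{R}}_{\delta\mathbb{P}}(N) \subseteq \underline{\mathbb{R}}_{\delta\mathbb{P}}(S \cup N)$ for the lower case, and the analogous pair for the upper case. Taking the union of the two left-hand sides in each case yields (1) and (2) respectively.

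For parts (3) and (4), I would symmetrically use $S \cap N \subseteq S$ and $S \cap N \subseteq N$. Again by Proposition~4(3), this produces $\underline{\mathbb{R}}_{\delta\mathbb{P}}(S \cap N) \subseteq \underline{\mathbb{R}}_{\delta\mathbb{P}}(S)$ and $\underline{\mathbb{R}}_{\delta\mathbb{P}}(S \cap N) \subseteq \underline{\mathbb{R}}_{\delta\mathbb{P}}(N)$, whose intersection is exactly (3); the same reasoning applied to $\overline{\mathbb{R}}_{\delta\mathbb{P}}$ delivers (4).

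There is no genuine obstacle here: the proposition is a formal consequence of monotonicity plus set-theoretic facts about union and intersection, and in particular it does not require any special properties of $\delta\mathbb{P}$-open sets such as closure under finite unions or intersections. The only thing to be careful about is not to claim equality in any of the four containments — this is why only one direction is asserted in each part, and any attempt to prove the reverse inclusions would fail precisely because $\delta\mathbb{P}O(\mathbb{X})$ need not be closed under the relevant operations, as one can verify from Example~1.
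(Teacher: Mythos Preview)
Your proposal is correct and matches the paper's own proof essentially line for line: the paper also starts from $S \subseteq S \cup N$ and $N \subseteq S \cup N$, invokes Proposition~4(3), and then simply declares that parts (2), (3), and (4) follow ``the same as (1).'' If anything, your write-up is slightly more explicit in spelling out the dual argument for (3) and (4) via $S \cap N \subseteq S$ and $S \cap N \subseteq N$.
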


\begin{proof}
{\color{white}.}
\begin{enumerate}
    \item Given that we have $S \subseteq S \cup N$ and $N \subseteq S \cup N$. And after that $\underline{\mathbb{R}}_\delta{}_\mathbb{P}(S) \subseteq \underline{\mathbb{R}}_\delta{}_\mathbb{P}(S \cup N)$ and $\underline{\mathbb{R}}_\delta{}_\mathbb{P}(N) \subseteq \underline{\mathbb{R}}_\delta{}_\mathbb{P}(S \cup N)$ by (3) in the Proposition 4, then $\underline{\mathbb{R}}_\delta{}_\mathbb{P}(S \cup N) \supseteq \underline{\mathbb{R}}_\delta{}_\mathbb{P}(S) \cup \underline{\mathbb{R}}_\delta{}_\mathbb{P}(N)$.

    \item (2), (3) and (4) the same as (1).
\end{enumerate}
\end{proof}

\begin{Theorem}
Assuming that $S$,$N$ $\subseteq \mathbb{X}$  and any $\delta\mathbb{P}$- approximation space ($\mathbb{X},\mathbb{R}_\delta{}_\mathbb{P}$)
  If $S$ is $\mathbb{R}_\delta{}_\mathbb{P}$-definable.The next items are then held.

 \begin{enumerate}
     \item $\underline{\mathbb{R}}_\delta{}_\mathbb{P}(S \cup N) = \underline{\mathbb{R}}_\delta{}_\mathbb{P}(S) \cup \underline{\mathbb{R}}_\delta{}_\mathbb{P}(N)$.

     \item $\overline{\mathbb{R}}_\delta{}_\mathbb{P}( S \cap N) = \overline{\mathbb{R}}_\delta{}_\mathbb{P}(S) \cap \overline{\mathbb{R}}_\delta{}_\mathbb{P}(N)$.
 \end{enumerate}
 \end{Theorem}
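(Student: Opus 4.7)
The plan is to prove part (1) directly and then derive part (2) from (1) by De Morgan duality. First, I unpack the hypothesis: by Definition 14 and Proposition 4(1), being $\mathbb{R}_{\delta\mathbb{P}}$-definable forces $\underline{\mathbb{R}}_{\delta\mathbb{P}}(S) = S = \overline{\mathbb{R}}_{\delta\mathbb{P}}(S)$, so $S$ is both $\delta\mathbb{P}$-open and $\delta\mathbb{P}$-closed; in particular $\mathbb{X}\setminus S \in \delta\mathbb{P}O(\mathbb{X})$, and one checks immediately using Proposition 6(1),(2) that $\mathbb{X}\setminus S$ is itself $\mathbb{R}_{\delta\mathbb{P}}$-definable.

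For (1), the inclusion $\underline{\mathbb{R}}_{\delta\mathbb{P}}(S\cup N) \supseteq \underline{\mathbb{R}}_{\delta\mathbb{P}}(S)\cup \underline{\mathbb{R}}_{\delta\mathbb{P}}(N)$ is given by Proposition 5(1). For the reverse inclusion I would take $x \in \underline{\mathbb{R}}_{\delta\mathbb{P}}(S\cup N)$, so that by Definition 9 there is a witness $V \in \delta\mathbb{P}O(\mathbb{X})$ with $x \in V \subseteq S \cup N$, and proceed by a case split. If $x \in S$, then since $\underline{\mathbb{R}}_{\delta\mathbb{P}}(S)=S$ we immediately get $x \in \underline{\mathbb{R}}_{\delta\mathbb{P}}(S) \cup \underline{\mathbb{R}}_{\delta\mathbb{P}}(N)$. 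Otherwise $x \in V \setminus S \subseteq N$, and the goal is to exhibit a $\delta\mathbb{P}$-open subset of $N$ that contains $x$; the natural candidate is $V \cap (\mathbb{X}\setminus S)$.

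The main obstacle is precisely verifying that $V \cap (\mathbb{X}\setminus S) \in \delta\mathbb{P}O(\mathbb{X})$, because $\delta\mathbb{P}O(\mathbb{X})$ is closed under arbitrary union but not under finite intersection in general. To push this through I would apply Lemma 1(2): $V \subseteq \operatorname{int}(\operatorname{cl}_\delta(V))$, and combine with the fact that $S$ being $\delta\mathbb{P}$-closed gives $\operatorname{cl}(\operatorname{int}_\delta(S)) \subseteq S$, which controls how much of $\operatorname{cl}_\delta(V)$ can lie inside $S$; the idea is to deduce $V \setminus S \subseteq \operatorname{int}(\operatorname{cl}_\delta(V \setminus S))$, i.e., that $V \setminus S$ is $\delta\mathbb{P}$-open. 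This step is the delicate one, and it is really where the hypothesis of $\delta\mathbb{P}$-definability is used.

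For part (2), once (1) is in hand the proof is routine by De Morgan duality. Using Proposition 6(1),
\begin{equation*}
\overline{\mathbb{R}}_{\delta\mathbb{P}}(S \cap N) \;=\; \mathbb{X} \setminus \underline{\mathbb{R}}_{\delta\mathbb{P}}\bigl((\mathbb{X}\setminus S) \cup (\mathbb{X}\setminus N)\bigr),
\end{equation*}
then apply part (1) to the pair $(\mathbb{X}\setminus S, \mathbb{X}\setminus N)$ (legitimate because $\mathbb{X}\setminus S$ is $\mathbb{R}_{\delta\mathbb{P}}$-definable), distribute the complement across the resulting union, and translate back via Proposition 6(1) a second time to obtain $\overline{\mathbb{R}}_{\delta\mathbb{P}}(S) \cap \overline{\mathbb{R}}_{\delta\mathbb{P}}(N)$. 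Thus the whole theorem rests on the one nontrivial closure-under-intersection fact flagged in the previous paragraph, with everything else being bookkeeping.
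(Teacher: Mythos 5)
Your reduction of part (2) to part (1) via Proposition 6 is correct and is actually cleaner than the paper's treatment (the paper proves (2) by a separate direct case analysis rather than by duality), and your opening observation that $\mathbb{R}_{\delta\mathbb{P}}$-definability of $S$ amounts to $S$ being simultaneously $\delta\mathbb{P}$-open and $\delta\mathbb{P}$-closed, so that $\mathbb{X}\setminus S$ is again definable, is the right way to set things up. The difficulty is that your entire proof of part (1) rests on the one step you yourself flag as ``delicate'' and never carry out: that $V\setminus S = V\cap(\mathbb{X}\setminus S)$ is $\delta\mathbb{P}$-open whenever $V$ is $\delta\mathbb{P}$-open and $S$ is $\delta\mathbb{P}$-clopen. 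That claim is false in general, so the sketch via Lemma 1 cannot be completed. Take $\mathbb{X}=\mathbb{R}$ with the usual topology (where $cl_\delta = cl$), $S=\mathbb{R}\setminus\mathbb{Q}$ and $V=S\cup\{0\}$. Both $S$ and $\mathbb{X}\setminus S=\mathbb{Q}$ are dense, hence $\delta\mathbb{P}$-open, so $S$ is $\delta\mathbb{P}$-clopen and therefore $\mathbb{R}_{\delta\mathbb{P}}$-definable; $V$ is dense, hence $\delta\mathbb{P}$-open; but $V\setminus S=\{0\}$ has $int(cl_\delta(\{0\}))=\emptyset$, so it is not $\delta\mathbb{P}$-open and in fact contains no nonempty $\delta\mathbb{P}$-open set at all.

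Worse, the same data refute the identity you are trying to prove: with $N=\{0\}$ one gets $\underline{\mathbb{R}}_{\delta\mathbb{P}}(S\cup N)=S\cup N$ (because $S\cup N=V$ is itself $\delta\mathbb{P}$-open), while $\underline{\mathbb{R}}_{\delta\mathbb{P}}(S)\cup\underline{\mathbb{R}}_{\delta\mathbb{P}}(N)=S\cup\emptyset=S$. So no choice of witness can rescue the argument without additional hypotheses. For comparison, the paper's own proof uses a different case split --- on whether the witness $V_0$ is contained in $S$, misses $S$, or meets both $S$ and its complement --- and in the third case infers $x\in\overline{\mathbb{R}}_{\delta\mathbb{P}}(S)$ from the fact that a single $\delta\mathbb{P}$-open set containing $x$ meets $S$; that inference breaks at exactly the same point (in the example above $0\notin\overline{\mathbb{R}}_{\delta\mathbb{P}}(S)=S$). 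Your instinct about where the load-bearing step sits was therefore exactly right; the step simply is not true, and the gap is genuine.
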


 \begin{proof}
{\color{white}.}
\begin{enumerate}
\item 
 Evidently $\underline{\mathbb{R}}_\delta{}_\mathbb{P}(S) \cup \underline{\mathbb{R}}_\delta{}_\mathbb{P}(N) \subseteq \underline{\mathbb{R}}_\delta{}_\mathbb{P}(S \cup N)$. To include the opposite, assume $x \in \underline{\mathbb{R}}_\delta{}_\mathbb{P}(S \cup N)$, that implies $x \in \bigcup\{V $ is $ \delta \mathbb{P}O(\mathbb{X}), V \subseteq S \cup N\}$. And after that, there $V_0$ $\in \delta \mathbb{P}O(\mathbb{X})$ in such a manner that $x \in$ $V_0$ $\subset S \cup N$. We present three instances:

\begin{enumerate}
    \item  
         If $V_0$ $\subset$ $S$, $x$ $\in$ $V_0$ and $V_0$ is a $\delta\mathbb{P}$-open  the set, then $x \in \underline{\mathbb{R}}_\delta{}_\mathbb{P}(S)$.
         
    \item
         If $V_0$ $\cap$ $S$ = $\emptyset$, then $V_0$ $\subseteq$ $N$ and $x \in$ $V_0$, thus $x \in \underline{\mathbb{R}}_\delta{}_\mathbb{P}(N)$.

    \item
         If $V_0$ $\cap S \neq \emptyset$. Since $x \in$ $V_0$ and $V_0$ is an $\delta\mathbb{P}$-open the set, then $x \in$ $\delta\mathbb{P}cl(S)$, each $V_0$  in the previously  mentioned condition, therefore $x \in \overline{\mathbb{R}}_{\delta{}\mathbb{P}}(S)$, then $x \in \underline{\mathbb{R}}_{\delta{}\mathbb{P}}(S)$, since $S$ is ${\delta{}\mathbb{P}}$- definable. Thus, in three instances x $\in \underline{\mathbb{R}}_{\delta{}\mathbb{P}}(S) \cup \underline{\mathbb{R}}_{\delta{}\mathbb{P}}(N)$.
\end{enumerate}

\item  Evidently $\overline{\mathbb{R}}_\delta{}_\mathbb{P}(S \cap N) \subseteq \overline{\mathbb{R}}_\delta{}_\mathbb{P}(S) \cap \overline{\mathbb{R}}_\delta{}_\mathbb{P}(N)$. We demonstrate the opposite inclusion, assume $x \in \overline{\mathbb{R}}_\delta{}_\mathbb{P}(S) \cap \overline{\mathbb{R}}_\delta{}_\mathbb{P}(N)$, then $x \in \overline{\mathbb{R}}_\delta{}_\mathbb{P}(S)$ denotes $x \in \underline{\mathbb{R}}_\delta{}_\mathbb{P}(S)$ and $x \in V \subseteq \mathbb{X}$, in which $V$ is an $\delta\mathbb{P}$-open the set and $x \in \overline{\mathbb{R}}_{\delta{}\mathbb{P}}(N)$ suggests for every $V$ $\in$ $\delta\mathbb{P}O(\mathbb{X})$, $V$ $\cap N \neq \emptyset$. Consequently $ V$  $\cap (S \cap N) = (V \cap S) \cap N = V \cap Y \neq \emptyset$. Therefore $x \in \overline{\mathbb{R}}_\delta{}_\mathbb{P}(S \cap N)$.
\end{enumerate}
\end{proof}

\begin{Theorem}
    Assuming that $S$,$N$ $\subseteq \mathbb{X}$  and any $\delta\mathbb{P}$- approximation space ($\mathbb{X},\mathbb{R}_\delta{}_\mathbb{P}$). Afterwards, the following are held.
    \begin{enumerate}
        \item 
    $\overline{\mathbb{R}}_\delta{}_\mathbb{P}(cl(S) \cup N) = cl(S) \cup \overline{\mathbb{R}}_\delta{}_\mathbb{P}(N)$,
    \item 
    $\underline{\mathbb{R}}_\delta{}_\mathbb{P}(int(S) \cap N) = int(S) \cap \underline{\mathbb{R}}_\delta{}_\mathbb{P}(N)$.
    \end{enumerate}
\end{Theorem}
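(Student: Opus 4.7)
The plan is to prove (1) directly by establishing both inclusions and then derive (2) from it by duality using Proposition 5.

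For the ``$\supseteq$'' direction of (1), I would invoke only general structural facts. Monotonicity of $\overline{\mathbb{R}}_{\delta\mathbb{P}}$ (Proposition 4(3)) gives $\overline{\mathbb{R}}_{\delta\mathbb{P}}(N) \subseteq \overline{\mathbb{R}}_{\delta\mathbb{P}}(cl(S)\cup N)$, while Proposition 4(1) gives $cl(S) \subseteq cl(S)\cup N \subseteq \overline{\mathbb{R}}_{\delta\mathbb{P}}(cl(S)\cup N)$; the union inclusion follows.

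For ``$\subseteq$'' of (1), the strategy is to exhibit $cl(S) \cup \overline{\mathbb{R}}_{\delta\mathbb{P}}(N)$ as a $\delta\mathbb{P}$-closed set containing $cl(S) \cup N$, so that (being a superset of the smallest $\delta\mathbb{P}$-closed extension) it must contain $\overline{\mathbb{R}}_{\delta\mathbb{P}}(cl(S)\cup N)$. Containment is immediate since $N \subseteq \overline{\mathbb{R}}_{\delta\mathbb{P}}(N)$. To establish $\delta\mathbb{P}$-closedness, I would first note that every $\tau$-closed set is $\delta\mathbb{P}$-closed: for an open $U$, trivially $U \subseteq cl_{\delta}(U)$, so $U = int(U) \subseteq int(cl_{\delta}(U))$, which makes $U$ a $\delta\mathbb{P}$-open set. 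Then I would prove the key sub-lemma that the union of a $\tau$-closed set and a $\delta\mathbb{P}$-closed set is $\delta\mathbb{P}$-closed, equivalently that $U \cap V$ is $\delta\mathbb{P}$-open whenever $U$ is open and $V$ is $\delta\mathbb{P}$-open. One starts from $U \cap V \subseteq U \cap int(cl_\delta(V))$ and reduces the required inclusion $U \cap int(cl_\delta(V)) \subseteq int(cl_\delta(U \cap V))$ to $U \cap cl_{\delta}(V) \subseteq cl_{\delta}(U \cap V)$ (since the left-hand side is already open), which is verified pointwise by shrinking any open neighborhood $A$ of $x \in U \cap cl_{\delta}(V)$ to $A \cap U$ before invoking $x \in cl_{\delta}(V)$.

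For (2), I would use duality. Setting $S' = \mathbb{X}\setminus S$ and $N' = \mathbb{X}\setminus N$, one has $int(S)\cap N = \mathbb{X}\setminus(cl(S')\cup N')$. Two applications of Proposition 5(1) together with Part (1) give
\[
\underline{\mathbb{R}}_{\delta\mathbb{P}}(int(S)\cap N) = \mathbb{X}\setminus\overline{\mathbb{R}}_{\delta\mathbb{P}}(cl(S')\cup N') = \mathbb{X}\setminus\bigl(cl(S')\cup\overline{\mathbb{R}}_{\delta\mathbb{P}}(N')\bigr) = int(S)\cap\underline{\mathbb{R}}_{\delta\mathbb{P}}(N).
\]

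The hard part will be the sub-lemma in (1): producing a witness that lies simultaneously in $U$, $V$, and $int(cl(A))$. A naive extraction from $int(cl(A\cap U)) \cap V$ yields a point in $int(cl(A)) \cap V$ but not obviously in $U$, since $int(cl(A\cap U))$ can spill outside $U$ into $cl(U)\setminus U$. Sharpening the argument requires exploiting the structure $V \subseteq int(cl_{\delta}(V))$ itself to produce a smaller open neighborhood of the candidate that is contained in both $U$ and $cl_\delta(V)$, and then iterating the $\delta$-density property of $V$ to pin down an element of $U \cap V$ inside $int(cl(A))$.
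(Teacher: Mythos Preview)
Your skeleton for part (1) is exactly what the paper does: the paper obtains ``$\supseteq$'' from monotonicity and $S\subseteq\overline{\mathbb{R}}_{\delta\mathbb{P}}(S)$, and for ``$\subseteq$'' observes that $cl(S)\cup\overline{\mathbb{R}}_{\delta\mathbb{P}}(N)$ is a $\delta\mathbb{P}$-closed set containing $cl(S)\cup N$. The only difference is that the paper does \emph{not} prove the sub-lemma ``closed $\cup$ $\delta\mathbb{P}$-closed is $\delta\mathbb{P}$-closed'' (equivalently ``open $\cap$ $\delta\mathbb{P}$-open is $\delta\mathbb{P}$-open''); it simply invokes it as a known property of $\delta$-preopen sets. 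So on this point you are attempting more than the paper does.

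That said, your attack on the sub-lemma has a real problem. The reduction to $U\cap cl_\delta(V)\subseteq cl_\delta(U\cap V)$ is a sufficient condition, but this inclusion is \emph{false} for $U$ merely $\tau$-open and $V$ arbitrary: with $X=\{a,b,c,d\}$, $\tau=\{\emptyset,X,\{a\},\{b\},\{a,b\},\{a,b,c\}\}$, $U=\{a,b,c\}$, $V=\{d\}$, one computes $\tau_s=\{\emptyset,X,\{a\},\{b\},\{a,b\}\}$, hence $cl_\delta(V)=\{c,d\}$, so $U\cap cl_\delta(V)=\{c\}$ while $cl_\delta(U\cap V)=\emptyset$. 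The point is that $cl_\delta$ is the closure in the semi-regularization $\tau_s$, and the standard ``open set pulls out of closure'' identity needs openness in $\tau_s$, which a $\tau$-open $U$ need not have. So the gap you flag in your last paragraph is genuine, and the ``iteration'' you sketch there is not obviously going to close it; any proof has to exploit the hypothesis $V\subseteq int(cl_\delta(V))$ more directly rather than passing through the stronger (and false) general inclusion.

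For part (2) you diverge from the paper: you derive it from (1) via the duality $\underline{\mathbb{R}}_{\delta\mathbb{P}}(\mathbb{X}\setminus A)=\mathbb{X}\setminus\overline{\mathbb{R}}_{\delta\mathbb{P}}(A)$ of Proposition~5, which is correct and tidier. The paper instead argues directly, using the very same sub-lemma once more to see that $int(S)\cap\underline{\mathbb{R}}_{\delta\mathbb{P}}(N)$ is $\delta\mathbb{P}$-open and hence equals its own $\underline{\mathbb{R}}_{\delta\mathbb{P}}$, which gives the ``$\supseteq$'' inclusion; ``$\subseteq$'' then comes from Proposition~6(3) together with $\underline{\mathbb{R}}_{\delta\mathbb{P}}(int(S))\subseteq int(S)$. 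Both routes ultimately rest on the same fact about $\delta\mathbb{P}$-open sets; your duality shortcut just avoids invoking it twice.
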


    \begin{proof}
{\color{white}.}
    \begin{enumerate}
        \item In accordance with propositions 4 (1) and  6 (2), we  $cl(S) \subset \overline{\mathbb{R}}_{\delta \mathbb{P}}(cl(S))$.Then cl(S) $\cup \overline{\mathbb{R}}_{\delta \mathbb{P}}(N) \subset \overline{\mathbb{R}}_{\delta \mathbb{P}}(cl(S)) \cup \overline{\mathbb{R}}_{\delta \mathbb{P}}(N) \subset \overline{\mathbb{R}}_{\delta \mathbb{P}}(cl(S) \cup N)$. However, since cl(S) $\cup M \subset cl(S) \cup \overline{\mathbb{R}}_{\delta \mathbb{P}}(N)$ and the union of an $\delta\mathbb{P} $-open set and a closed set is $\delta \mathbb{P}$-closed, and after that $\overline{\mathbb{R}}_{\delta \mathbb{P}}(cl(S) \cup N) \subset \overline{\mathbb{R}}_{\delta\mathbb{P} }(cl(S) \cup \overline{\mathbb{R}}_{\delta \mathbb{P}}(N)) = cl(S) \cup \overline{\mathbb{R}}_{\delta\mathbb{P} }(N)$. Consequently $\overline{\mathbb{R}}_{\delta \mathbb{P}}(cl(S) \cup N) = cl(S) \cup \overline{\mathbb{R}}_{\delta \mathbb{P}}(N)$.

        \item Given that  an open set's intersection  with int(S) and an $\delta\mathbb{P}$-open set $\underline{\mathbb{R}}_\delta{}_\mathbb{P}(N)$ is $\delta\mathbb{P}$-open, int(S) $\cap   \underline{\mathbb{R}}_\delta{}_\mathbb{P}(N) = \underline{\mathbb{R}}_\delta{}_\mathbb{P}(int(S) \cap \underline{\mathbb{R}}_\delta{}_\mathbb{P}(N)) \subset \underline{\mathbb{R}}_\delta{}_\mathbb{P}(int(S) \cap N)$. However, by applying proposition 6 (3), $\underline{\mathbb{R}}_\delta{}_\mathbb{P}(int(S) \cap N) \subset \underline{\mathbb{R}}_\delta{}_\mathbb{P}(int(S)) \cap \underline{\mathbb{R}}_\delta{}_\mathbb{P}(N) \subset int(S) \cap \underline{\mathbb{R}}_\delta{}_\mathbb{P}(N)$. Consequently $\underline{\mathbb{R}}_\delta{}_\mathbb{P}(int(S) \cap N) = int(S) \cap \underline{\mathbb{R}}_\delta{}_\mathbb{P}(N)$.
    \end{enumerate}
 \end{proof}

\begin{Lemma}
   For any $\delta\mathbb{P}$-approximation space ($\mathbb{X},\mathbb{R}_\delta{}_\mathbb{P}$) furthermore, for everyone $c$,$d \in \mathbb{X}$, the state of $c \in \overline{\mathbb{R}}_\delta{}_\mathbb{P}(\{d\}$) and $d \in \overline{\mathbb{R}}_\delta{}_\mathbb{P}(\{c\}$) infers $\overline{\mathbb{R}}_\delta{}_\mathbb{P}(\{c\}) = \overline{\mathbb{R}}_\delta{}_\mathbb{P}(\{d\})$.
\end{Lemma}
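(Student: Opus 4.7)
The plan is to exploit two structural properties of the $\delta\mathbb{P}$-upper approximation operator already proved in the paper: monotonicity (Proposition 4(3)) and idempotence (Proposition 5(4)). The hypotheses $c \in \overline{\mathbb{R}}_{\delta\mathbb{P}}(\{d\})$ and $d \in \overline{\mathbb{R}}_{\delta\mathbb{P}}(\{c\})$ can be rewritten as the singleton inclusions $\{c\} \subseteq \overline{\mathbb{R}}_{\delta\mathbb{P}}(\{d\})$ and $\{d\} \subseteq \overline{\mathbb{R}}_{\delta\mathbb{P}}(\{c\})$, which is exactly the form needed to apply monotonicity to the whole set $\overline{\mathbb{R}}_{\delta\mathbb{P}}(\cdot)$ on each side.

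First, starting from $\{c\} \subseteq \overline{\mathbb{R}}_{\delta\mathbb{P}}(\{d\})$, I would apply Proposition 4(3) to both sides to obtain
$$\overline{\mathbb{R}}_{\delta\mathbb{P}}(\{c\}) \subseteq \overline{\mathbb{R}}_{\delta\mathbb{P}}\bigl(\overline{\mathbb{R}}_{\delta\mathbb{P}}(\{d\})\bigr).$$
Then Proposition 5(4), which gives $\overline{\mathbb{R}}_{\delta\mathbb{P}}(\overline{\mathbb{R}}_{\delta\mathbb{P}}(\{d\})) = \overline{\mathbb{R}}_{\delta\mathbb{P}}(\{d\})$, collapses the right-hand side to yield $\overline{\mathbb{R}}_{\delta\mathbb{P}}(\{c\}) \subseteq \overline{\mathbb{R}}_{\delta\mathbb{P}}(\{d\})$.

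Next, by the symmetric argument starting from the second hypothesis $\{d\} \subseteq \overline{\mathbb{R}}_{\delta\mathbb{P}}(\{c\})$, the same two steps give $\overline{\mathbb{R}}_{\delta\mathbb{P}}(\{d\}) \subseteq \overline{\mathbb{R}}_{\delta\mathbb{P}}(\{c\})$. Combining the two inclusions yields the desired equality $\overline{\mathbb{R}}_{\delta\mathbb{P}}(\{c\}) = \overline{\mathbb{R}}_{\delta\mathbb{P}}(\{d\})$.

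There is no real obstacle here; the argument is a direct two-line consequence of monotonicity and idempotence, entirely parallel to the classical topological fact that $x \in \overline{\{y\}}$ and $y \in \overline{\{x\}}$ imply $\overline{\{x\}} = \overline{\{y\}}$. The only thing to be careful about is invoking Proposition 5(4) (idempotence of $\overline{\mathbb{R}}_{\delta\mathbb{P}}$) rather than trying to unfold the definition in terms of intersections of $\delta\mathbb{P}$-closed sets, which would work but would duplicate the proof of that proposition.
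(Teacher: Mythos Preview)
Your proof is correct and follows essentially the same route as the paper: establish $\overline{\mathbb{R}}_{\delta\mathbb{P}}(\{c\}) \subseteq \overline{\mathbb{R}}_{\delta\mathbb{P}}(\{d\})$ from the first hypothesis and then invoke symmetry. The only cosmetic difference is that the paper argues directly from the definition (``$\delta\mathbb{P}cl(\{d\})$ is a $\delta\mathbb{P}$-closed set containing $c$, and $\delta\mathbb{P}cl(\{c\})$ is the smallest such''), whereas you package the same fact as monotonicity plus idempotence via Propositions~4(3) and~5(4).
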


   \begin{proof}
   According to the definition of $\delta\mathbb{P}$-upper approximation  a set is a $\delta\mathbb{P}$-closure of this set, furthermore $\delta \mathbb{P}cl(\{d\})$ is a $\delta\mathbb{P}$-closed set containing $c$ (according to the condition) but $\delta \mathbb{P}cl(\{c\})$ is the tiniest $\delta\mathbb{P}$-closed set containing $c$, thus $\delta \mathbb{P}cl(\{c\}) \subseteq \delta \mathbb{P}cl(\{d\})$. Therefore $\overline{\mathbb{R}}_\delta{}_\mathbb{P}(\{c\}) \subseteq \overline{\mathbb{R}}_\delta{}_\mathbb{P}(\{d\})$ By symmetry, the opposite inclusion occurs $\delta \mathbb{P}cl(\{d\}) \subseteq \delta \mathbb{P}cl(\{c\})$. therefore $\overline{\mathbb{R}}_\delta{}_\mathbb{P}(\{d\}) \subseteq \overline{\mathbb{R}}_\delta{}_\mathbb{P}(\{c\})$ we obtain $\overline{\mathbb{R}}_\delta{}_\mathbb{P}(\{c\}) = \overline{\mathbb{R}}_\delta{}_\mathbb{P}(\{d\})$.
\end{proof}

\begin{Lemma}
    Assume that ($\mathbb{X},\mathbb{R}_\delta{}_\mathbb{P}$) be a $\delta\mathbb{P}$-approximation space, where each $\delta\mathbb{P}$-open subset $S$ of $\mathbb{X}$ is $\delta\mathbb{P}$-closed, Then  $d \in \overline{\mathbb{R}}_\delta{}_\mathbb{P}(\{c\})$ therefore $c \in \overline{\mathbb{R}}_\delta{}_\mathbb{P}(\{d\})$ for every $c$,$d \in \mathbb{X}$.
\end{Lemma}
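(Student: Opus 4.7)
The plan is to exploit the hypothesis to show that $\delta\mathbb{P}$-open and $\delta\mathbb{P}$-closed coincide, so that singletons' $\delta\mathbb{P}$-closures behave symmetrically. First I would translate the statement into closure language: by definition of the $\delta\mathbb{P}$-upper approximation (and the proof of the preceding Lemma) we have $\overline{\mathbb{R}}_{\delta\mathbb{P}}(\{c\}) = \delta\mathbb{P}cl(\{c\})$, i.e. the intersection of all $\delta\mathbb{P}$-closed sets containing $c$, and similarly for $\{d\}$. So the claim reduces to: $d \in \delta\mathbb{P}cl(\{c\}) \Rightarrow c \in \delta\mathbb{P}cl(\{d\})$.

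Next I would upgrade the hypothesis. We are told every $\delta\mathbb{P}$-open set is $\delta\mathbb{P}$-closed. Taking complements, every $\delta\mathbb{P}$-closed set $F$ is the complement of some $\delta\mathbb{P}$-open set $U = F^c$; since $U$ is also $\delta\mathbb{P}$-closed by hypothesis, $F = U^c$ is $\delta\mathbb{P}$-open. Hence in this setting $\delta\mathbb{P}O(\mathbb{X}) = \delta\mathbb{P}C(\mathbb{X})$, i.e. every $\delta\mathbb{P}$-open set is clopen and vice versa.

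Now I would argue by contrapositive. Suppose $c \notin \overline{\mathbb{R}}_{\delta\mathbb{P}}(\{d\}) = \delta\mathbb{P}cl(\{d\})$. Then some $\delta\mathbb{P}$-closed set $F$ contains $d$ but not $c$. By the previous step, $F^c$ is $\delta\mathbb{P}$-closed, contains $c$, and hence $\delta\mathbb{P}cl(\{c\}) \subseteq F^c$. In particular $d \notin \delta\mathbb{P}cl(\{c\}) = \overline{\mathbb{R}}_{\delta\mathbb{P}}(\{c\})$, which is the required contrapositive.

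The main obstacle, minor but worth stating cleanly, is the justification of the "open $\Leftrightarrow$ closed" equivalence from the one-sided hypothesis; once that is in hand, the implication follows from a single application of the definition of the $\delta\mathbb{P}$-closure as the smallest $\delta\mathbb{P}$-closed set containing the given singleton, so no further machinery is needed.
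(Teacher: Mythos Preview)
Your proposal is correct and follows essentially the same contrapositive route as the paper: assume $c\notin\overline{\mathbb{R}}_{\delta\mathbb{P}}(\{d\})$, produce a $\delta\mathbb{P}$-clopen set separating $c$ from $d$, and complement to conclude $d\notin\overline{\mathbb{R}}_{\delta\mathbb{P}}(\{c\})$. The only cosmetic difference is that the paper starts from the open-neighborhood characterization of the $\delta\mathbb{P}$-closure (a $\delta\mathbb{P}$-open $V\ni c$ with $V\cap\{d\}=\emptyset$) and then applies the one-sided hypothesis directly to $V$, whereas you first upgrade the hypothesis to the two-sided equivalence $\delta\mathbb{P}O(\mathbb{X})=\delta\mathbb{P}C(\mathbb{X})$ and then work with a $\delta\mathbb{P}$-closed $F\ni d$ missing $c$; these are the same argument up to taking complements once.
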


\begin{proof}
    If $c \notin \overline{\mathbb{R}}_\delta{}_\mathbb{P}(\{d\})$, then  there is a $\delta\mathbb{P}$-open set $V$ include $c$ such that $V$ $\cap \{d\} = \emptyset$ which suggests that $\{d\} \subseteq (\mathbf{X} \backslash V)$ but ($\mathbb{X} \backslash V$) is a $\delta\mathbb{P}$-closed set additionally is a $\delta\mathbb{P}$-open set does not include $c$, thus ($\mathbb{X} \backslash V$) $\cap$ $\{$c$\}$ = $\emptyset$. Therefore $d \notin \overline{\mathbb{R}}_\delta{}_\mathbb{P}(\{c\})$.
\end{proof}

\begin{Proposition}    
  Assume that ($\mathbb{X},\mathbb{R}_{\delta{}\mathbb{P}}$) be a $\delta\mathbb{P}$- approximation space, and all of them $\delta\mathbb{P}$-open subset $S$ of $\mathbb{X}$ is $\delta\mathbb{P}$-closed. After that, the family of sets $\{\overline{\mathbb{R}}_\delta{}_\mathbb{P}(\{c\})$ : $c \in S\}$ is a division of the set $\mathbb{X}$.
\end{Proposition}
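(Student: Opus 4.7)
The plan is to verify the three defining properties of a partition: nonemptiness of each block, the blocks covering $\mathbb{X}$, and the blocks being pairwise either equal or disjoint. The first two are essentially free from earlier results, and the real content is that two $\delta\mathbb{P}$-upper approximations of singletons that share a point must actually coincide. The assumption that every $\delta\mathbb{P}$-open set is $\delta\mathbb{P}$-closed is exactly the hypothesis that triggers the two preceding lemmas, so the proof will be a clean composition of Lemmas~4 and~5 together with Proposition~4(1).

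First I would handle the easy pieces. By Proposition~4(1) applied to the singleton $\{c\}$, we have $c \in \overline{\mathbb{R}}_{\delta\mathbb{P}}(\{c\})$, which simultaneously gives $\overline{\mathbb{R}}_{\delta\mathbb{P}}(\{c\}) \neq \emptyset$ for each $c$ and the covering identity $\mathbb{X} = \bigcup_{c \in \mathbb{X}} \overline{\mathbb{R}}_{\delta\mathbb{P}}(\{c\})$, since every element of $\mathbb{X}$ lies in at least one block, namely its own.

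The main step is the ``disjoint or equal'' property. Assume $\overline{\mathbb{R}}_{\delta\mathbb{P}}(\{c\}) \cap \overline{\mathbb{R}}_{\delta\mathbb{P}}(\{d\}) \neq \emptyset$ and pick a point $e$ in the intersection. From $e \in \overline{\mathbb{R}}_{\delta\mathbb{P}}(\{c\})$, Lemma~5 (applied with the roles $c \leftrightarrow e$) gives $c \in \overline{\mathbb{R}}_{\delta\mathbb{P}}(\{e\})$; combined with $e \in \overline{\mathbb{R}}_{\delta\mathbb{P}}(\{c\})$, Lemma~4 yields $\overline{\mathbb{R}}_{\delta\mathbb{P}}(\{c\}) = \overline{\mathbb{R}}_{\delta\mathbb{P}}(\{e\})$. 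The identical argument applied to $d$ gives $\overline{\mathbb{R}}_{\delta\mathbb{P}}(\{d\}) = \overline{\mathbb{R}}_{\delta\mathbb{P}}(\{e\})$. Chaining the two equalities, $\overline{\mathbb{R}}_{\delta\mathbb{P}}(\{c\}) = \overline{\mathbb{R}}_{\delta\mathbb{P}}(\{d\})$, as required.

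The only subtle point, and the one that requires care in presentation, is making sure the appeal to Lemma~5 is done in the correct direction (it is stated only as a one-way implication, but the statement is symmetric in $c$ and $d$, so it suffices to invoke it twice with the names swapped). Once this is clear, the rest is bookkeeping. I do not expect any serious obstacle, since the two preceding lemmas were essentially designed to make this partition statement hold: Lemma~5 provides the symmetry needed to get mutual membership from one-sided membership, and Lemma~4 converts mutual membership into equality of blocks.
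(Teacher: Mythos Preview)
Your proof is correct and follows essentially the same route as the paper's: pick a point in the intersection of two blocks, invoke the symmetry lemma (every $\delta\mathbb{P}$-open set is $\delta\mathbb{P}$-closed) to upgrade one-sided membership to mutual membership, and then invoke the preceding lemma to conclude the two blocks coincide. The only difference is cosmetic: you additionally spell out the nonemptiness and covering conditions via Proposition~4(1), which the paper leaves implicit, and your internal numbering of the two lemmas is shifted by one relative to the paper's own cross-references.
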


  \begin{proof}
  If $c$,$d$,$f$ $\in S$ furthermore $f \in \overline{\mathbb{R}}_\delta{}_\mathbb{P}(\{c\}) \cap \overline{\mathbb{R}}_\delta{}_\mathbb{P}(\{d\})$, then $f \in \overline{\mathbb{R}}_\delta{}_\mathbb{P}(\{c\})$ furthermore $f \in \overline{\mathbb{R}}_\delta{}_\mathbb{P}(\{d\})$. Consequently, by Lemma 3, $c \in \overline{\mathbb{R}}_\delta{}_\mathbb{P}(\{f\})$ and $d \in \overline{\mathbb{R}}_\delta{}_\mathbb{P}(\{f\})$ furthermore by Lemma 4, as we have $\overline{\mathbb{R}}_\delta{}_\mathbb{P}(\{c\}) = \overline{\mathbb{R}}_\delta{}_\mathbb{P}(\{f\})$ and $\overline{\mathbb{R}}_\delta{}_\mathbb{P}(\{d\}) = \overline{\mathbb{R}}_\delta{}_\mathbb{P}(\{f\})$. Consequently $\overline{\mathbb{R}}_\delta{}_\mathbb{P}(\{c\}) = \overline{\mathbb{R}}_\delta{}_\mathbb{P}(\{d\}) = \overline{\mathbb{R}}_\delta{}_\mathbb{P}(\{f\})$. Therefore either $\overline{\mathbb{R}}_\delta{}_\mathbb{P}(\{c\}) = \overline{\mathbb{R}}_\delta{}_\mathbb{P}(\{d\})$ or $\overline{\mathbb{R}}_\delta{}_\mathbb{P}(\{c\}) \cap \overline{\mathbb{R}}_\delta{}_\mathbb{P}(\{d\}) = \emptyset$
\end{proof}

%%%%%%%%%%%%%%%%%%%%%%%%%%%%%%%%%%%%%%%%%%

%%%%%%%%%%%%%%%%%%%%%%%%%%%%%%%%%%%%%%%%%%
\section{Conclusions}
This paper introduces the $\delta\mathbb{P}$- approximation operator, a new class of approximations that we introduced using the $\delta\mathbb{P}$-open sets class. Furthermore, the $\delta\mathbb{P}$- approximation yields 24 unique granules of the discourse universe.The most extensive granulation based on closure and interior operator in topological spaces is used in our method, which is the class of $\delta\mathbb{P}$- open sets. Because of this, the accuracy measurements are higher than when using any kind of near-open sets, like $\alpha$-open, etc. There are some generalizations of significant characteristics of the traditional Pawlak's rough sets. Additionally, we used our approach to define the notion of rough membership function. It is an extension of the traditional rough membership function of Pawlak rough sets. In a decision information system, depending on a conditional attribute. The decision that has to be made can be done using the generalized rough membership function. Intelligent computational versions of granular beneficial computing is generated by the rough set approach to approximation of sets.

%
% ---- Bibliography ----
%
% BibTeX users should specify bibliography style 'splncs04'.
% References will then be sorted and formatted in the correct style.
%

%
\end{document}